\newtheorem{theorem}{Theorem}
\newtheorem{proposition}{Proposition}
\newtheorem{lemma}{Lemma}
\newtheorem{corollary}{Corollary}
\theoremstyle{remark}
\newcommand{\Aut}{\operatorname{Aut}}
\newcommand{\Gal}{\operatorname{Gal}}
\newcommand{\Jac}{\operatorname{Jac}}
\newcommand{\Q}{\mathbb{Q}}
\newcommand{\Z}{\mathbb{Z}}
\newcommand{\C}{\mathbb{C}}
\newcommand{\GL}{\operatorname{GL}}
\newcommand{\SL}{\operatorname{SL}}
\newcommand{\PP}{{\mathbb P}}
\newcommand{\im}{\operatorname{im}}
\title{Rational Isolated $j$-invariants from $X_1(\ell^n)$ and $X_0(\ell^n)$}
\author{Abbey Bourdon}
\address{Wake Forest University, Winston-Salem, NC 27104, USA}
\email{bourdoam@wfu.edu}
\author{\"{O}zlem Ejder}
\address{Koc University, Istanbul, Turkey}
\email{ozejder@ku.edu.tr}
\begin{document}
\begin{abstract}
    Let $\ell$ and $n$ be positive integers with $\ell$ prime. The modular curves $X_1(\ell^n)$ and $X_0(\ell^n)$ are algebraic curves over $\Q$ whose non-cuspidal points parameterize elliptic curves with a distinguished point of order $\ell^n$ or a distinguished cyclic subgroup of order $\ell^n$, respectively. We wish to understand isolated points on these curves, which are roughly those not belonging to an infinite parameterized family of points having the same degree. Our first main result is that there are precisely 15 $j$-invariants in $\Q$ which arise as the image of an isolated point $x\in X_1(\ell^n)$ under the natural map $j:X_1(\ell^n) \rightarrow X_1(1)$. This completes a prior partial classification of Ejder. We also identify the 19 rational $j$-invariants which correspond to isolated points on $X_0(\ell^n)$.
\end{abstract}

\maketitle

\section{Introduction}
Let $C$ be a nice curve\footnote{By ``nice" we mean smooth, projective, and geometrically integral.} defined over a number field $k$. We say a degree $d$ closed point $x \in C$ is \textbf{isolated} if it is not part of an infinite family of degree $d$ points parameterized by the projective line or a positive rank abelian subvariety of the curve's Jacobian (see Section~\ref{sec:isolated} for details). A special class of isolated points are \textbf{sporadic} points: those $x \in C$ for which there are only finitely many points of degree at most $\deg(x)$. Sporadic points on the modular curve $X_1(n)$ have been of interest for years, partly owing to their importance in classifying torsion subgroups of elliptic curves over number fields of fixed degree. Recall that $X_1(n)$ is a curve over $\Q$ whose (non-cuspidal) points parameterize elliptic curves with a distinguished point of order $n$. Early examples of sporadic points on $X_1(n)$ include those found by van Hoeij \cite{vanHoeij} and Najman \cite{najman16}, who identified sporadic points of degree 6 on $X_1(37)$ and sporadic points of degree 3 on $X_1(21)$, respectively. However, one of the first papers to propose a systematic study of isolated points on $X_1(n)$ is that of the present authors in collaboration with Liu, Odumodu, and Viray \cite{BELOV}. There, the authors show that a positive answer to Serre's Uniformity Problem would imply there are only finitely many $j\in\Q$ which are the image of an isolated point $x \in X_1(n)$ under the natural map $j:X_1(n) \rightarrow X_1(1)$, even as $n$ ranges over all positive integers. This is the set of so-called \textbf{isolated $j$-invariants} in $\Q$, with respect to the family $X_1(n)$. We will call such $j$-invariants \textbf{$\Gamma_1$-isolated}. Note that $j\in X_1(1) \cong \mathbb{P}^1$ is never isolated as a point on $X_1(1)$, so the terminology is only a convenient way to characterize elliptic curves (up to $\overline{\Q}$-isomorphism) giving rise to an isolated point on $X_1(n)$ for some $n \in \Z^+$. This is in analogy to how we use the term ``singular moduli" for $j$-invariants of CM elliptic curves. By \cite[Theorem 7.1]{BELOV}, all singular moduli are $\Gamma_1$-isolated.

Several papers provide partial progress towards classifying the set of rational $\Gamma_1$-isolated $j$-invariants \cite{BELOV,Ejder22,OddDeg}, which has recently been conjectured to have only 17 elements; see \cite{Algorithm2025}. In particular, it is known to contain every $j$-invariant associated to an elliptic curve with complex multiplication (CM), along with 4 non-CM $j$-invariants. In the present work, we provide an unconditional classification of rational $\Gamma_1$-isolated $j$-invariants associated to $X_1(\ell^n)$ where $\ell$ is prime. This completes work of the second author \cite{Ejder22} for $\ell>7$. Here $j(x)$ denotes the image of $x\in X_1(n)$ under the $j$-map $X_1(n) \rightarrow X_1(1)$.

\begin{theorem}\label{X1ellThm}
    Let $\ell$ be prime and let $n$ be a positive integer. Then $j\in\Q$ arises as $j=j(x)$ for an isolated point $x\in X_1(\ell^n)$ if and only if $j$ is a CM $j$-invariant, $-7\cdot 11^3$, or $-7 \cdot 137^3\cdot 2083^3$. Moreover, the non-CM $j$-invariants occur if and only if $\ell=37$.
\end{theorem}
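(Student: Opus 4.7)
The plan is to split Theorem~\ref{X1ellThm} into a sufficiency statement (each listed $j$-invariant is the image of some isolated point on some $X_1(\ell^n)$) and a necessity statement (no other rational $j$-invariant is). Sufficiency for the CM $j$-invariants is immediate from \cite[Theorem~7.1]{BELOV}, which asserts that every singular modulus is $\Gamma_1$-isolated. For $j=-7\cdot 11^3$ and $j=-7\cdot 137^3\cdot 2083^3$, I would appeal to the degree-$6$ sporadic points on $X_1(37)$ constructed by van~Hoeij~\cite{vanHoeij}; since sporadic points are isolated by definition, the only thing to verify is that the two constructed closed points sit above the claimed $j$-invariants and that $\ell=37$ is indeed the only prime power at which these two $j$-invariants are isolated.

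For necessity, the cases $\ell\in\{2,3,5,7\}$ are handled in \cite{Ejder22}, so the task reduces to ruling out all non-CM rational $j$-invariants for $\ell\geq 11$ except the two listed at $\ell=37$. Assuming such a $j$ exists, I would attach to it an elliptic curve $E/\Q$ and analyze the mod $\ell^n$ Galois representation. The first step is to use isolatedness criteria from \cite{BELOV} to argue that $E$ must admit a rational $\ell$-isogeny; combined with Mazur's theorem on isogenies over $\Q$, this restricts $\ell$ to the finite list $\{11,13,17,19,37,43,67,163\}$, and for each such prime there are only finitely many non-cuspidal points in $X_0(\ell)(\Q)$. For each candidate $j$, I would then use the explicit mod $\ell^n$ Galois representations attached to $E$ (via Sutherland--Zywina type classifications and iterated lifting) to read off the degrees of the preimages of $j$ in $X_1(\ell^n)$, and compare these to the minimal degrees of infinite families of points parameterized by $\PP^1$ or by positive-rank abelian subvarieties of $J_1(\ell^n)$. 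The objective is to exhibit, for every non-exceptional candidate, an infinite parameterized family containing a point above $j$, thereby showing non-isolatedness.

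The main obstacle is the tower analysis: for each $(\ell,j)$ identified above, one must rule out isolated points not just on $X_1(\ell)$ but on every $X_1(\ell^n)$. This requires understanding how the mod $\ell$ Galois image propagates to mod $\ell^n$, and how the geometry of $X_1(\ell^n)$, in particular its gonality and the Mordell--Weil ranks of quotients of its Jacobian, controls the existence of infinite families of low-degree points. Particular care is required at $\ell=37$ to confirm that beyond the two van~Hoeij degree-$6$ points on $X_1(37)$ no further isolated points appear anywhere in the tower, and at the other Mazur primes where $X_0(\ell)(\Q)$ contains non-CM points (most notably $\ell=11,17,19$) to rule out every isolated point arising above them.
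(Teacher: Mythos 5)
There are several genuine gaps here, and the most serious one is structural: you have the division of labor with \cite{Ejder22} reversed. Ejder's earlier paper handles the primes $\ell>7$ (modulo an error in \cite{LR16} that the present argument must repair via Smith's ramification bounds); the new content of the theorem is precisely the classification for $\ell\leq 7$, together with the corrected degree computation (Proposition~\ref{PropDegModCurve}) for large $\ell$. Your plan, which takes $\ell\in\{2,3,5,7\}$ as already known and sets out to redo $\ell\geq 11$, therefore omits exactly the cases that require proof. Relatedly, your proposed first reduction for necessity --- that an isolated point forces a rational $\ell$-isogeny, so Mazur's isogeny theorem bounds $\ell$ --- is false and would miss the hardest case entirely: a non-CM curve whose mod $\ell$ image is (contained in) the normalizer of a non-split Cartan subgroup admits no rational $\ell$-isogeny, and since Serre's Uniformity Problem is open, these images cannot be excluded for any odd $\ell$. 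The correct skeleton is a case analysis on $\im\rho_{E,\ell^{\infty}}$: known images are dispatched by an algorithmic criterion (Theorem~\ref{AlgThmGamma1}); the non-split Cartan normalizer case is handled by Furio's constraints on the $\ell$-adic image, the degree formula of Proposition~\ref{PropDegModCurve} for $\ell>13$, and genus/Riemann--Roch comparisons on $X_1(\ell^2)$ for $\ell\leq 13$ (Theorem~\ref{CsnThm}); and by \cite[Theorem 1.6]{RSZB2022} the only remaining possibility is $\ell=7$ with image in the exceptional group $49.196.9.1$, which your proposal does not mention and which occupies Section~5 (requiring, among other things, Momose--Shimura's determination of the rational points of $X_0^+(7^4)$).

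Your sufficiency argument is also incomplete for the second exceptional $j$-invariant. The point above $j=-7\cdot 137^3\cdot 2083^3$ on $X_1(37)$ has degree $18$, not $6$, so it is not one of van Hoeij's degree-$6$ sporadic points and its isolatedness does not follow from the gonality bound (half the $\Q$-gonality of $X_1(37)$ is $9$); the paper instead invokes \cite[Theorems 2 and 48]{Algorithm2025}. Only the degree-$6$ point above $j=-7\cdot 11^3$ is isolated by the Frey/gonality argument. Finally, note that the ``moreover'' clause (that the non-CM examples occur only for $\ell=37$) is not something you can get from exhibiting the two points; it is a byproduct of the exhaustive image-by-image analysis above.
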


The main result of \cite{Ejder22} relies on work of Lozano-Robledo \cite[Theorem 1.2]{LR16}, which contains an error (see \cite[$\S 1$]{Smith23} for details). Nevertheless, the argument remains valid by instead appealing to work of Smith \cite{Smith23}. We provide a corrected proof in Proposition~\ref{PropDegModCurve}. It thus remains to give a classification for primes $\ell \leq 7$. Our main technique is \cite[Theorem 4.3]{BELOV}, which concerns the image of isolated points belonging to irreducible fibers. We use complete and partial classification results on the image of $\ell$-adic Galois representations associated to elliptic curves over $\Q$ to show that a hypothetical isolated point $x \in X_1(\ell^n)$ would map down to an isolated point $y\in X_1(\ell^k)$ for a small explicit $k \in \Z^+$. Aside from the stated exceptions, we can show $y$ is not isolated, meaning the isolated point $x$ does not exist.

Similar tools can be used to address isolated points on the modular curve $X_0(n)$, which have previously been studied by Menendez \cite{Menendez}, Terao \cite{Terao}, and Lee \cite{Lee25}. Recall $X_0(n)$ is an algebraic curve over $\Q$ whose (non-cuspidal) points parameterize elliptic curves with a distinguished cyclic subgroup of order $n$. If $x \in X_0(n)$ is isolated, we say $j(x)\in X_1(1)$ is \textbf{$\Gamma_0$-isolated}. By \cite[Theorem 44]{Lee25}, all singular moduli are $\Gamma_0$-isolated, and there are 14 known non-CM $j$-invariants in $\Q$ which are $\Gamma_0$-isolated (see \cite{Terao,Lee25}). Our second main result gives an unconditional classification of rational $\Gamma_0$-isolated $j$-invariants associated to curves of prime-power level. As before, $j(x)$ denotes the image of a point under the $j$-map $X_0(\ell^n) \rightarrow X_0(1)$.

\begin{theorem}\label{X0ellThm}
        Let $\ell$ be prime and let $n$ be a positive integer. Then $j\in\Q$ arises as $j=j(x)$ for an isolated point $x\in X_0(\ell^n)$ if and only if $j$ is a CM $j$-invariant, $-11\cdot 131^3$, $-11^2$, $-17^2\cdot 101^3/2$, $-17\cdot 373^3/2^{17}$, $-7\cdot 11^3$, or $-7 \cdot 137^3\cdot 2083^3$. Moreover, the non-CM $j$-invariants in this list correspond to isolated rational points on $X_0(\ell)$.
\end{theorem}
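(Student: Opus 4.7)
\smallskip\noindent\textbf{Proof proposal.} For the ``if'' direction, the CM case follows from Lee's \cite[Theorem 44]{Lee25}, but I would need to verify that the underlying construction can be arranged at prime-power level; this is plausible because every CM elliptic curve over $\Q$ admits rational isogenies of various prime degrees, giving rational (hence sporadic, hence isolated) points on $X_0(\ell^n)$ of positive genus. The six non-CM $j$-invariants are precisely the non-CM rational non-cuspidal points on $X_0(\ell)$ for $\ell\in\{11,17,37\}$, as recorded in Mazur's isogeny theorem. To certify these as isolated, I would observe that $X_0(11)$ and $X_0(17)$ are elliptic curves of Mordell--Weil rank $0$, so all rational points are sporadic; and $X_0(37)$ has genus $2$ with $\Q$-gonality $2$, whence its rational non-cuspidal points are isolated by a direct rank/gonality check on $\Jac(X_0(37))$.

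For the converse, I would mirror the strategy of Theorem~\ref{X1ellThm}. The first step is to install an $X_0$-analog of \cite[Theorem 4.3]{BELOV}: if $x\in X_0(\ell^n)$ is isolated and the fiber of the degeneracy map $X_0(\ell^n)\to X_0(\ell^k)$ above the image $y$ of $x$ is irreducible, then $y$ is isolated. Fiber irreducibility translates into transitivity of the Galois action on cyclic lifts of the distinguished subgroup, so it is controlled by the $\ell$-adic image of Galois of the underlying non-CM elliptic curve $E/\Q$ with $j(E)=j(x)$. The goal is to combine this with known $\ell$-adic Galois image classifications to force $x$ to descend to an isolated $y\in X_0(\ell^k)$ for an explicit small $k$, and then enumerate the low-degree points on that finite list of modular curves.

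The prime-by-prime analysis would then proceed as follows. For $\ell$ large enough, the analog of Proposition~\ref{PropDegModCurve} (or direct application of Smith \cite{Smith23}) bounds the degree of points on $X_0(\ell^n)$ away from any isolated regime. For $\ell\in\{2,3,5,7,11,13\}$, I would invoke the explicit classifications of Rouse--Zureick-Brown and Rouse--Sutherland--Zureick-Brown to pin down which subgroups of $\GL_2(\Z_\ell)$ are realized by a rational non-CM $E$; for each such subgroup one reads off the smallest $k$ to which an isolated point of $X_0(\ell^n)$ must descend. Finally one checks the known rational (and, where needed, quadratic) points on $X_0(\ell^k)$ at the relevant levels, confirming that no rational $j$-invariant outside the stated list survives and that the six non-CM cases must live at $n=1$.

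The principal obstacle is the small-prime casework, particularly $\ell=2$, where the $2$-adic image classification is large and the intermediate curves $X_0(2^k)$ host many low-degree points to be processed; also delicate is ensuring that the $X_0$-analog of \cite[Theorem 4.3]{BELOV} applies uniformly, since the stabilizer of a cyclic subgroup is a Borel rather than the stabilizer of a point, so the irreducibility criterion must be reformulated in terms of Borel cosets before it can be fed into the Galois image data. A secondary, more routine task is verifying that the six non-CM rational points on $X_0(\ell)$ do not lift to isolated points on any $X_0(\ell^n)$ with $n\geq 2$, which reduces to a short check using the mod $\ell^2$ image of Galois for the five underlying elliptic curves.
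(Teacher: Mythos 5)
Your overall architecture (descend an isolated point on $X_0(\ell^n)$ to small level using Galois-image data, then enumerate low-degree points) matches the paper's, and your certification of the six non-CM rational points on $X_0(11)$, $X_0(17)$, $X_0(37)$ as isolated is fine. But there is a genuine gap at the heart of the converse direction: you assume that for $\ell\in\{3,5,7,11,13\}$ one can simply ``invoke the explicit classifications'' of \cite{2adicImage,RSZB2022} to pin down all realizable subgroups of $\GL_2(\Z_\ell)$. That classification is complete only for $\ell=2$; for odd $\ell$, \cite[Theorem 1.6]{RSZB2022} leaves open exactly two families of potential images: those whose mod-$\ell$ reduction lies in the normalizer of a non-split Cartan subgroup, and (for $\ell=7$) subgroups of the exceptional group $49.196.9.1$. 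These unresolved cases are where the real work of the paper lives. For the non-split Cartan normalizer the paper uses Furio's partial constraints \cite[Theorem 1.9]{Furio2024} together with explicit degree computations (and, for $\ell>13$, Smith's ramification bounds via Proposition~\ref{PropDegModCurve} and Corollary~\ref{CorModCurve}) to show any would-be isolated point pushes down to $X_0(1)\cong\PP^1$ or has degree exceeding the genus of $X_0(\ell^2)$ (Corollary~\ref{CsnCor}); for $49.196.9.1$ a separate argument via $X_s^+(49)\cong X_0^+(7^4)$ pins down the full $7$-adic image (Corollary~\ref{7adicCor}). Your plan, as written, would silently miss elliptic curves in these hypothetical classes. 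Relatedly, you flag $\ell=2$ as the principal obstacle, when in fact it is the one prime handled entirely by the (complete) classification and the algorithmic step; and the ``$X_0$-analog of the irreducibility criterion'' you worry about is not needed, since Theorem~\ref{BELOV_level} is stated for arbitrary finite maps of curves and the residue-field computation for $[E,C]$ in terms of Borel cosets is already carried out in \cite{Lee25}.

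A second, smaller gap is the CM ``if'' direction. Your heuristic---that every CM curve over $\Q$ has rational cyclic $\ell^n$-isogenies landing on a positive-genus $X_0(\ell^n)$---is not reliable: rational cyclic prime-power isogenies of CM curves over $\Q$ are quite restricted, and a rational point on a positive-genus curve is not automatically isolated (one needs finiteness of rational points, or genus $\geq 2$). The paper instead takes any prime $\ell$ split in the CM field, which yields a \emph{quadratic} point on $X_0(\ell^n)$ for every $n$, and these are sporadic once $\ell^n>131$ by \cite{harrissilverman,oggpaper,bars}. You would need to substitute an argument of this kind to make the CM case go through.
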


We note that the set of rational $\Gamma_1$-isolated $j$-invariants in Theorem 1 is contained in the set of rational $\Gamma_0$-isolated $j$-invariants given in Theorem 2. However, this containment fails to hold away from prime-power level: by \cite[Theorem 1]{Lee25}, the $j$-invariant $351/4$ is $\Gamma_1$-isolated but not $\Gamma_0$-isolated. 

\subsection{Outline}
In Section 3 we use a modified version of the algorithms of \cite{Algorithm2025,Lee25} to characterize isolated points on $X_1(\ell^n)$ and $X_0(\ell^n)$ corresponding to non-CM elliptic curves over $\Q$ with $\ell$-adic Galois representation appearing in classifications of Rouse and Zureick-Brown \cite{2adicImage} or Rouse, Sutherland, and Zureick-Brown \cite{RSZB2022}. If $E/\Q$ is a non-CM elliptic curve whose mod $\ell$ Galois representation has image in the normalizer of a non-split Cartan subgroup, then recent work of Furio \cite{Furio2024} characterizes the possible full $\ell$-adic image of Galois. This partial classification is enough to rule out isolated points on $X_1(\ell^n)$ or $X_0(\ell^n)$ associated to $E$, which we show in Section 4. This leaves only the (possibly non-existent) case of non-CM elliptic curves $E/\Q$ with 7-adic image of Galois contained in an exceptional subgroup labeled 49.196.9.1. In Section 5, we prove the following proposition, which allows us to complete the proofs of our two main results.

\begin{proposition}
    Suppose $E/\Q$ is a non-CM elliptic curve with $\im \rho_{E,7^{\infty}}$ contained in $49.196.9.1$. Then  $\im \rho_{E,7^{\infty}}=49.196.9.1$.
\end{proposition}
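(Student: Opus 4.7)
My plan is to show directly that no proper open subgroup of $49.196.9.1$ satisfying the necessary condition $\det(H) = \Z_7^*$ can arise as $\im \rho_{E,7^\infty}$ for a non-CM elliptic curve $E/\Q$; this then forces the stated equality. Using the subgroup lattice of $\GL_2(\Z_7)$ computed in \cite{RSZB2022}, I would first enumerate, up to conjugacy, the finitely many maximal proper open subgroups $H \subsetneq 49.196.9.1$ with surjective determinant. Any strictly smaller image would lie in (a conjugate of) some such $H$, so this reduces the proposition to a finite check.

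For each candidate $H$, the containment $\im \rho_{E,7^\infty} \subseteq H$ is equivalent to the existence of a non-cuspidal, non-CM rational point on the modular curve $X_H$ lying above the point of $X_{49.196.9.1}$ associated to $E$. To rule out such lifts, I plan to apply the following hierarchy of tools to each $H$: (i) consult the LMFDB and the RSZB tables for covers where $X_H(\Q)$ has already been explicitly determined and consists only of cusps and CM points; (ii) exhibit a local obstruction for genus-zero covers not isomorphic to $\PP^1_\Q$; (iii) enumerate $X_H(\Q)$ via torsion for positive-genus covers whose Jacobians have rank zero; (iv) for positive-rank covers, apply Chabauty--Coleman on an appropriate quotient arising from the intermediate subgroup lattice.

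The main obstacle will be the handful of higher-genus covers where the Jacobian may have positive Mordell--Weil rank and where the rational points are not already tabulated. In that case my strategy is to exploit the modular structure: decompose $\Jac(X_H)$ into isogeny factors induced by the intermediate subgroups of $49.196.9.1$, locate an isogeny factor of rank strictly less than its dimension on which Chabauty--Coleman applies cleanly, and descend the rational-point computation to a lower-genus quotient. If no such quotient yields a direct argument, a Mordell--Weil sieve at auxiliary primes of good reduction for $E$ should provide enough congruence constraints to eliminate any remaining candidate. Once every maximal proper subgroup has been ruled out, the hypothesized containment $\im \rho_{E,7^\infty} \subseteq 49.196.9.1$ must be an equality, as claimed.
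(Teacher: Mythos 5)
Your overall architecture --- reduce to the finitely many maximal open subgroups $H \subsetneq 49.196.9.1$ with surjective determinant and rule out non-cuspidal, non-CM rational points on each $X_H$ --- is sound in principle and matches the skeleton of the paper's argument (the paper's version of this reduction is a Magma computation showing that the only proper mod-$49$ image with full determinant lies in a single conjugacy class, and that mod $343$ only the full preimage reduces onto $49.196.9.1$, so the level is $49$). But as written your proposal is a plan that defers all of the actual difficulty to an unspecified "hierarchy of tools," and the decisive case is one where none of your generic tools (local obstructions, rank-zero torsion arguments, Chabauty--Coleman, Mordell--Weil sieve) is feasible as stated. The problematic subgroup is, up to conjugacy, an index-$7$ subgroup of the normalizer of a split Cartan subgroup mod $49$; the relevant curve dominates $X_s^+(49) \cong X_0^+(7^4)$, whose genus is on the order of $100$. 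Chabauty--Coleman or a Mordell--Weil sieve on such a curve (or on its Jacobian factors) is not a routine computation, and you give no reason to expect a low-genus quotient of the required type to exist. The paper instead identifies the group inside $C_s^+(49)$, uses the moduli-theoretic isomorphism $X_s^+(7^2) \cong X_0^+(7^4)$, and invokes Momose--Shimura's theorem that $X_0^+(7^4)$ has no non-cuspidal, non-CM rational points. Without that identification and that external input, your step (iv) has no realistic path to completion.

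A second, smaller gap: among the maximal subgroups you must also handle those whose mod-$7$ reduction is a proper subgroup of $7\mathrm{Ns}$ (namely $7\mathrm{Ns.2.1}$ and $7\mathrm{Ns.3.1}$). These occur for exactly one $j$-invariant, $3^3\cdot 5\cdot 7^5/2^7$, by Zywina's classification, and the paper disposes of this case by an explicit computation with the curve 2450.y1 showing that $[\Q(E[49]):\Q(E[7])] = 7^4$, which forces the $7$-adic image to have level $7$ and hence not lie in $49.196.9.1$. Your proposal does not mention this case, and it is not obviously covered by a table lookup, since the relevant covers live at level $49$ and need not appear in the LMFDB. In short, the reduction to a finite check is correct, but the proof is incomplete until the specific hard subgroups are identified and the specific arguments (Zywina plus an explicit torsion-field computation; the split-Cartan identification plus Momose--Shimura) are supplied.
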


\subsection{Code}
We make frequent use of the computer algebra system Magma \cite{Magma}. All code and relevant data is available at
\url{https://github.com/abbey-bourdon/rational-isolated-prime-power}.

\section*{Acknowledgments}
The first author was partially supported by NSF grant DMS-2145270 and the second author was partially supported by TUBITAK-124F203 grant. We thank Jeremy Rouse for useful discussions concerning the data files associated to \cite{2adicImage} and \cite{RSZB2022}, including his creation of the file \href{https://github.com/abbey-bourdon/rational-isolated-prime-power/blob/main/elladicgens.txt}{\tt{elladicgens.txt}} which contains a list of all known $\ell$-adic images associated to non-CM elliptic curves over $\Q$.

\section{Background}
\subsection{Conventions} Throughout $k$ denotes a number field and $\Gal_k$ is used to denote the absolute Galois group of $k$. We use $\ell$ to denote a prime number and $\Z_{\ell}$ for the ring of $\ell$-adic integers.

By curve we mean a projective nonsingular $1$-dimensional scheme over $k$. For a curve $C/k$, we define the degree of a closed point $x\in C$ to be the degree of its residue field $k(x)$ as an extension of $k$.

  An elliptic curve is a curve of genus $1$ with a specified base point. We say an elliptic curve $E$ over $k$ has complex multiplication, or {CM}, if its geometric endomorphism ring is strictly larger than $\mathbb{Z}$. For $P \in E(\overline{k})$, we use $k(P)$ to denote the field extension of $k$ generated by the $x$- and $y$-coordinates of $P$.

  For any open subgroup $G \leq \GL_2(\Z_\ell)$, we define its level to be the smallest integer $\ell^n$ such that $G=\pi_n^{-1}(G(\ell^n))$, where $G(\ell^n)$ is the image of $G$ under the projection map $\pi_n: \GL_2(\Z_{\ell}) \to  \GL_2(\Z/\ell^n \Z)$. 

  Subgroups of $\GL_2(\Z/\ell\Z)$ appearing as the image of the mod $\ell$ Galois representation of an elliptic curve over $\Q$ are described using the notation of Sutherland \cite{sutherland}. Subgroups of $\GL_2(\Z_{\ell})$ appearing as the image of the $\ell$-adic Galois representation of a non-CM elliptic curve over $\Q$ are denoted by the notation of Rouse, Sutherland, and Zureick-Brown \cite{RSZB2022}. For an open subgroup $G\leq \GL_2(\Z_{\ell})$, this notation is of the form
  \begin{center}
      \tt{N.i.g.n}
  \end{center}
  where $N$ denotes the level, $i$ denotes the index, $g$ denotes the genus of the modular curve $X_G$, and $n$ is an integer used to distinguish groups having the same level, index, and genus.
 
 \subsection{Galois Representations}
Let $E/k$ be an elliptic curve and let $\ell$ be a prime number. By fixing a basis for the $\ell$-adic Tate module of $E$, we obtain the $\ell$-adic representation given by  
\[ \rho_{E,\ell^{\infty}}: \Gal_k \to \GL_2(\Z_{\ell}).
\]
This describes the Galois action on all points in $E(\overline{k})$ with order a power of $\ell$. Similarly for any integer $n \geq 1$, we also have the mod $\ell^n$ representation
 \[ \rho_{E,\ell^n}: \Gal_k \to \GL_2(\Z/{\ell^n}\Z),
\]
which describes the Galois action on the $\ell^n$-torsion subgroup $E[\ell^n]$ of $E(\bar{k})$. We denote the image of $ \rho_{E,\ell^{\infty}}$ and $ \rho_{E,\ell^n}$ by $\im \rho_{E,\ell^{\infty}}$ and $\im \rho_{E,\ell^n}$, respectively. By Serre's Open Image Theorem \cite{serre72}, we know that $\im \rho_{E,\ell^{\infty}}$ is open in $\GL_2(\Z_{\ell})$ for any non-CM elliptic curve $E/k$, so its level is well defined.
  \subsection{Modular Curve $X_1(n)$} For a positive integer $n$, the modular curve $X_1(n)$ is an algebraic curve over $\Q$ whose non-cuspidal points correspond to isomorphism classes of elliptic curves with a distinguished point of order $n$. We may understand the complex points of $X_1(n)$ by first constructing $\mathbb{H}/\Gamma_1(n)$, where $\mathbb{H}$ is the upper half plane and the group
        \[
            \Gamma_1(n) \coloneqq
            \left\{{\left(\begin{smallmatrix}a & b \\ c & d \end{smallmatrix}\right)} \in \SL_2(\Z) :  c \equiv 0 \pmod{n}, \, a \equiv d \equiv 1 \pmod{n}\right\} 
        \]
        acts via linear fractional transformations. By adding a finite number of cusps, we obtain a compact Riemann surface whose complex points agree with $X_1(n)(\C)$. The non-cuspidal points of $X_1(n)(\C)$ correspond to pairs $(E,P)$ where $E/\C$ is an elliptic curve and $P\in E(\C)$ is of order $n$, up to $\C$-isomorphism. If $n \geq 4$, the same holds true when $\C$ is replaced by any number field $k$. That is, for these $n$ the curve $X_1(n)$ is a fine moduli space.

If $E/k$ is an elliptic curve with $P\in E(\overline{k})$ of order $n$, then $(E,P)$ induces a $k$-valued point on $X_1(n)$ by the moduli interpretation. However, as explained in $\S2.1$ we define the degree of $(E,P)$ to be the degree of the residue field of the corresponding closed point $[E,P]$. This degree can be computed explicitly using the following result.

\begin{lemma}\label{ResidueFieldLemma}
Let $E$ be an elliptic curve defined over $\Q(j(E))$, and let $P \in E$ be a point of order $n$. Then the residue field of the closed point $x \in X_1(n)$ associated to $(E,P)$ is given by
\[
\Q(x)\cong\Q(j(E),\mathfrak{h}(P)),
\]
where $\mathfrak{h} \rightarrow E/\Aut(E) \cong \mathbf{P}^1$ is a Weber function for $E$.
\end{lemma}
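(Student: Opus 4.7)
The plan is to translate the residue field computation into a comparison of two stabilizers in $\Gal_{\Q}$ and then exploit the defining property of the Weber function. Fix an algebraic closure $\overline{\Q}$. By the moduli interpretation, the $\overline{\Q}$-points of $X_1(n)$ lying over $x$ are isomorphism classes $[(E',P')]$ of pairs over $\overline{\Q}$, and $\Gal_{\Q}$ acts by $\sigma \cdot [(E',P')] = [((E')^\sigma, (P')^\sigma)]$. Choosing the representative $(E,P)$, standard algebraic geometry identifies $\Q(x)$ with the fixed field of
\[
H \;=\; \{\sigma \in \Gal_{\Q} : \exists\, \varphi: E^\sigma \xrightarrow{\sim} E \text{ with } \varphi(P^\sigma) = P\}.
\]
It therefore suffices to show $H$ coincides with the stabilizer $H'$ of the pair $(j(E),\mathfrak{h}(P))$ in $\Gal_{\Q}$.

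The argument rests on two standard properties of the Weber function: (i) because $E$ descends to $\Q(j(E))$, so does $\mathfrak{h}: E \to E/\Aut(E) \cong \PP^1$; and (ii) for any $P, Q \in E(\overline{\Q})$, $\mathfrak{h}(P) = \mathfrak{h}(Q)$ if and only if $Q = \alpha(P)$ for some $\alpha \in \Aut(E_{\overline{\Q}})$. For the inclusion $H \subseteq H'$, take $\sigma \in H$ with witnessing isomorphism $\varphi$. From $\sigma(j(E)) = j(E^\sigma) = j(E)$ we see $\sigma$ fixes $j(E)$; since $E$ is defined over $\Q(j(E))$, this upgrades to $E^\sigma = E$, so $\varphi \in \Aut(E_{\overline{\Q}})$. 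Property (i) gives $\sigma(\mathfrak{h}(P)) = \mathfrak{h}(P^\sigma)$, and (ii) applied to $\varphi$ yields $\mathfrak{h}(P^\sigma) = \mathfrak{h}(\varphi(P^\sigma)) = \mathfrak{h}(P)$; thus $\sigma \in H'$.

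For the reverse inclusion $H' \subseteq H$, suppose $\sigma$ fixes both $j(E)$ and $\mathfrak{h}(P)$. As above $E^\sigma = E$, and by (i), $\mathfrak{h}(P^\sigma) = \sigma(\mathfrak{h}(P)) = \mathfrak{h}(P)$; then (ii) produces $\alpha \in \Aut(E_{\overline{\Q}})$ with $\alpha(P^\sigma) = P$, placing $\sigma \in H$. The only delicate ingredient is property (i) when $\Aut(E_{\overline{\Q}})$ is strictly larger than $\{\pm 1\}$, i.e.\ when $j(E) \in \{0, 1728\}$; outside these cases, $\mathfrak{h}(P)$ is essentially $x(P)$ and the assertion is elementary. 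For the two exceptional $j$-invariants, the classical construction of the Weber function (as in Chapter II of Silverman's \emph{Advanced Topics in the Arithmetic of Elliptic Curves}) furnishes a model defined over $\Q(j(E))$, which is the main technical input but not a serious obstacle.
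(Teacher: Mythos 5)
Your argument is correct. Note, though, that the paper does not actually prove this lemma: it simply cites \cite[Lemma 2.5]{OddDegQCurve}, so there is no in-text proof to compare against; what you have written is essentially the standard argument underlying that reference. Your reduction of the residue field computation to a stabilizer comparison is sound, since for a closed point of a $\Q$-variety the residue field is (up to isomorphism) the fixed field of the stabilizer of a geometric point above it, and the coarse moduli interpretation of $X_1(n)(\overline{\Q})$ is Galois-equivariant --- a fact you label ``standard algebraic geometry,'' which is fair, but it is worth being aware that this is exactly where the coarse-moduli (rather than fine-moduli) property enters, so the argument also covers the small $n$ for which $X_1(n)$ is not a fine moduli space. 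The two properties of the Weber function you isolate are the right ones: (i) that $\mathfrak{h}$ is defined over $\Q(j(E))$, which is genuinely part of the hypothesis (the lemma would be false for an arbitrary $\overline{\Q}$-model of the quotient map, since postcomposing with an automorphism of $\PP^1$ not defined over $\Q(j(E))$ changes the field generated by $\mathfrak{h}(P)$), and (ii) that the fibers of $\mathfrak{h}$ are exactly the $\Aut(E_{\overline{\Q}})$-orbits. Your handling of the key step --- an abstract isomorphism $E^\sigma\cong E$ forces $\sigma(j(E))=j(E)$, hence $\sigma$ fixes $\Q(j(E))$ pointwise, hence $E^\sigma=E$ on the nose and the isomorphism is an automorphism --- is exactly what makes the Weber function the correct invariant, including at $j=0,1728$ where the explicit models $x$, $x^2$, $x^3$ over $\Q(j(E))$ settle property (i).
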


\begin{proof}
See, for example, \cite[Lemma 2.5]{OddDegQCurve}.
\end{proof}

Our study of isolated points makes frequent use of the natural maps between modular curves. We record the degree of one such map below.
    \begin{proposition}\label{prop:degree}
                For positive integers $a$ and $b$, there is a natural $\Q$-rational map $f \colon X_1(ab) \rightarrow X_1(a)$ sending $[E,P]$ to $[E, bP]$ with
                \[
                    \deg(f)=
                    c_f\cdot b^2 \prod_{p \mid b, p \nmid a}\left( 1-\frac{1}{p^2} \right).             \]
                    Here, $c_f=1/2$ if $a \leq 2$ and $ab>2$ and $c_f=1$ otherwise.
          \end{proposition}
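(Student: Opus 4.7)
The plan is to compute $\deg(f)$ analytically over $\C$ by realizing $f$ as the map between compactified quotients of the upper half-plane induced by the inclusion $\Gamma_1(ab) \subseteq \Gamma_1(a)$, and to invoke the moduli-theoretic description $(E,P)\mapsto(E,bP)$ to see that $f$ is defined over $\Q$ (this is immediate, since multiplication-by-$b$ on $E$ commutes with the $\Gal_\Q$-action on torsion points).

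First, I would use the identification $X_1(n)(\C) \cong \overline{\mathbb{H}/\Gamma_1(n)}$ to reduce the degree computation to an index of images in $\PSL_2(\Z)$: namely $\deg(f) = [\bar\Gamma_1(a) : \bar\Gamma_1(ab)]$, where $\bar\Gamma$ denotes the image of $\Gamma \leq \SL_2(\Z)$ in $\PSL_2(\Z)$. (The degree of the covering of noncompact quotients equals the degree of the induced map of compactifications.)

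Next, I would apply the standard index formula $[\SL_2(\Z) : \Gamma_1(n)] = n^2 \prod_{p \mid n}(1 - p^{-2})$ (valid for all $n\geq 1$, with the empty product equal to $1$) to obtain, by multiplicativity of indices,
\[
[\Gamma_1(a) : \Gamma_1(ab)] = b^2 \prod_{p \mid b,\, p \nmid a}\left(1 - \frac{1}{p^2}\right).
\]
To pass from an index in $\SL_2(\Z)$ to one in $\PSL_2(\Z)$, I would use the elementary observation that $-I \in \Gamma_1(n)$ if and only if $n \in \{1,2\}$. Writing $\bar\Gamma = \Gamma\cdot\{\pm I\}/\{\pm I\}$, a short index comparison shows $[\bar\Gamma_1(a):\bar\Gamma_1(ab)]$ equals $[\Gamma_1(a):\Gamma_1(ab)]$ when $-I$ has the same membership status in both $\Gamma_1(a)$ and $\Gamma_1(ab)$, and equals half of this index precisely when $-I \in \Gamma_1(a)$ but $-I \notin \Gamma_1(ab)$, i.e., when $a \leq 2$ and $ab \geq 3$. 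This reproduces exactly the case distinction encoded in $c_f$.

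The main obstacle, such as it is, lies in the bookkeeping of the $\pm I$ factor; modulo that, the proof is just the $\Gamma_1$-index formula combined with multiplicativity of the index. One sanity check I would perform is to compare, in the range $a \geq 3$ where $X_1(a)$ is a fine moduli space, the formula against a direct fiber count: a generic fiber over $(E,Q) \in X_1(a)$ consists of the $P \in E$ with $bP = Q$ and $\ord(P) = ab$, and a straightforward enumeration via the $p$-primary decomposition of $E[ab]$ gives $b^2 \prod_{p\mid b,\, p\nmid a}(1-p^{-2})$ such points, matching the formula with $c_f=1$.
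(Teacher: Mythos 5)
Your proof is correct and is essentially the argument the paper delegates to its citation of \cite[p.~66]{modular}: the degree is computed as the index $[\bar\Gamma_1(a):\bar\Gamma_1(ab)]$ in $\PSL_2(\Z)$ via the standard index formula for $\Gamma_1(n)$, with the factor $c_f$ coming from the observation that $-I\in\Gamma_1(n)$ exactly when $n\leq 2$. Nothing further is needed.
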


          \begin{proof}
              The degree computation follows from \cite[p.66]{modular}.
          \end{proof}

\subsection{Modular Curve $X_0(n)$} The modular curve $X_0(n)$ is an algebraic curve over $\Q$ whose non-cuspidal points correspond to isomorphism classes of elliptic curves with a distinguised cyclic subgroup of order $n$. As above, we may understand $X_0(n)(\C)$ as the compactification of the Riemann surface given by $\mathbb{H}/\Gamma_0(n)$, where
        \[
            \Gamma_0(n) \coloneqq
            \left\{{\left(\begin{smallmatrix}a & b \\ c & d \end{smallmatrix}\right)} \in \SL_2(\Z) :  c \equiv 0 \pmod{n}\right\} 
        \]
acts by linear fractional transformations. If $E/k$ is an elliptic curve and $C$ is a $k$-rational cyclic subgroup of order $n$, then $(E,C)$ gives a point in $X_0(n)(k)$. However, in general these pairs are equivalent if they agree up to $\overline{k}$-isomorphism. That is, $X_0(n)$ is not a fine moduli space.

For a non-CM elliptic curve $E/\Q(j(E))$ with a cyclic subgroup $C$ of order $n$, the residue field of the closed point $[E,C]$ is precisely the extension over $\Q(j(E))$ over which $C$ is fixed (as a set) by the action of Galois. See \cite[$\S 3.3$]{ClarkVolcanoes} for details. The degree of the natural map between modular curves of the form $X_0(n)$ is given below.

\begin{proposition}
 For positive integers $a$ and $b$, there is a natural $\Q$-rational map $f:X_0(ab) \rightarrow X_0(a)$ sending $[E, \langle P \rangle]$ to $[E, \langle bP \rangle]$ with
        \begin{align*} \deg(f) =
            b \prod_{p \mid b, p \nmid a}\left(1 + \frac{1}{p}\right).
        \end{align*}

    \begin{proof}
        This follows from the formulas of \cite[p. 66]{modular}.
    \end{proof}
\end{proposition}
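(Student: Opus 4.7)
The plan is to reduce the degree computation to an index computation of congruence subgroups of $\SL_2(\Z)$. Over $\C$, the curves $X_0(ab)$ and $X_0(a)$ arise as compactifications of $\mathbb{H}/\Gamma_0(ab)$ and $\mathbb{H}/\Gamma_0(a)$, and the map $f$ is induced by the inclusion $\Gamma_0(ab) \subset \Gamma_0(a)$. Since $-I$ lies in $\Gamma_0(n)$ for every $n$ and acts trivially on $\mathbb{H}$, passing to the quotients $\overline{\Gamma_0(n)}=\Gamma_0(n)/\{\pm I\}$ is consistent across both curves, so no ``$c_f$'' correction factor of the sort appearing in Proposition~\ref{prop:degree} arises. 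Consequently the degree of $f$ equals the index $[\Gamma_0(a):\Gamma_0(ab)]$.

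Next I would appeal to the classical formula
\[
[\SL_2(\Z):\Gamma_0(n)] \;=\; n \prod_{p\mid n}\left(1+\tfrac{1}{p}\right),
\]
which is obtained by identifying the coset space $\SL_2(\Z)/\Gamma_0(n)$ with the set of cyclic subgroups of order $n$ in $(\Z/n\Z)^2$ and counting via the Chinese Remainder Theorem, reducing to a straightforward count at each prime power dividing $n$. Applying this formula to both $n=ab$ and $n=a$ and taking the ratio, all factors $(1+1/p)$ corresponding to primes $p\mid a$ cancel, leaving precisely $b\prod_{p\mid b,\, p\nmid a}(1+1/p)$.

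Finally, to confirm that the map is defined over $\Q$ and that the degree statement holds there (and not merely over $\C$), I would invoke the canonical $\Q$-models of $X_0(a)$ and $X_0(ab)$: the prescription $[E,\langle P\rangle]\mapsto[E,\langle bP\rangle]$ is Galois-equivariant, so $f$ descends to a $\Q$-rational morphism, and its degree is unchanged under base extension to $\C$. I do not anticipate any real obstacle here; the only subtle point is the verification that the degree of the quotient map equals the index, which is well-behaved precisely because $-I$ belongs to every $\Gamma_0(n)$. This is why the authors simply cite \cite[p.~66]{modular} rather than providing a self-contained derivation.
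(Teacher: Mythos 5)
Your argument is correct and is essentially the same computation the paper delegates to the citation of \cite[p.~66]{modular}: the degree equals the index $[\Gamma_0(a):\Gamma_0(ab)]$ (no correction factor since $-I$ lies in every $\Gamma_0(n)$), and the ratio of the standard indices $[\SL_2(\Z):\Gamma_0(n)]=n\prod_{p\mid n}(1+1/p)$ yields $b\prod_{p\mid b,\,p\nmid a}(1+1/p)$. Your remarks on identifying $f$ with the map induced by the inclusion $\Gamma_0(ab)\subset\Gamma_0(a)$ and on $\Q$-rationality are accurate and fill in exactly what the citation leaves implicit.
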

\subsection{Isolated Points}\label{sec:isolated}

Let $C/k$ be a curve with a point $P \in C(k)$. For a positive integer $d$, let $C^d$ denote the direct product of $d$ copies of $C$. We denote the $d$-th symmetric product of $C$ by $C^{(d)}$, i.e., the quotient of $C^d$ by the symmetric group $S_d$.  We have a natural map
 $\phi_d \colon C^{(d)} \to \Jac(C)$ given by $(P_1,\ldots, P_d) \mapsto [P_1+\ldots P_d-dP]$. Let $x\in C$ be a closed point of degree $d$, which we may identify with a point in $C^{(d)}(k)$ by viewing it as a Galois orbit of points in $C(\overline{k})$ having length $d$. We say $x$ is:
 \begin{itemize} 
 \item\textbf{$\PP^1$-parameterized} if there is $x' \in C^{(d)}(k)$ such that $\phi_d(x)=\phi_d(x')$. Otherwise we say $x$ is \textbf{$\PP^1$-isolated}.
 \item \textbf{AV-parameterized} if there exists a positive rank subabelian variety $A$ of $\Jac(C)$ such that $\phi_d(x) + A \subset \im(\phi_d)$. Otherwise we say $x$ is \textbf{AV-isolated}.
 \end{itemize}
We say $x$ is \textbf{isolated} if it is both $\PP^1$-isolated and AV-isolated.

Any curve $C/k$ contains only finitely many isolated points, and it has infinitely many points of degree $d$ if and only if there exists a degree $d$ point which is $\PP^1$-parameterized or AV-parameterized; see \cite[Theorem 4.2]{BELOV}. In particular, if $C$ has genus at least 2, then any point in $C(k)$ is isolated by Faltings' Theorem \cite{faltings83}. Thus one can consider the study of isolated points as a kind of generalization of the study of rational points. See recent work of Viray and Vogt \cite{VirayVogt} for more discussion.

One key result that can be used to rule out the existence of isolated points is the following.

\begin{theorem}[Bourdon, Ejder, Liu, Odumodu, Viray \cite{BELOV}]\label{BELOV_level}
    Let $f:C \rightarrow D$ be a finite map of curves over $k$, and let $x \in C$ be an isolated point. If $\deg(x)=\deg(f(x))\cdot \deg(f),$ then $f(x) \in D$ is isolated.
\end{theorem}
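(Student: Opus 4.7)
The plan is to prove the contrapositive: assuming $y := f(x)$ is not isolated on $D$, show $x$ is not isolated on $C$. Set $d := \deg(f)$ and $e := \deg(y)$, so the hypothesis reads $\deg(x) = de$. The first observation is that this degree identity forces a strong local constraint on $f$ at $x$. Writing the divisor-theoretic preimage as $f^*(y) = \sum_i r_i x_i$, where the $x_i$ are the closed points of $f^{-1}(y)$ with ramification indices $r_i$, the standard identity $\sum_i r_i \deg(x_i) = d\cdot e$ combined with $\deg(x) = de$ and $x \in \{x_i\}$ forces $x$ to be the unique preimage of $y$ and $r_x = 1$. In other words, $f^*(y) = x$ as an effective divisor of degree $de$ on $C$, i.e., as a point of $C^{(de)}$.

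Suppose first that $y$ is $\PP^1$-parameterized, witnessed by some $y' \in D^{(e)}(k)$ with $y' \neq y$ and $\phi_e(y) = \phi_e(y')$. After removing a common effective part if necessary, the linear equivalence yields a nonconstant morphism $g \colon D \to \PP^1$ whose fibers (augmented by the common part) trace a nonconstant $\PP^1$-family in $D^{(e)}$ passing through $y$. Pulling each member back through $f^*$ produces a $\PP^1$-family in $C^{(de)}$ passing through $f^*(y) = x$. Because $f^*$ is injective on effective divisors (one recovers $E$ from $f^*E$ via the projection formula $f_*f^*E = d\cdot E$), the pulled-back family is still nonconstant, and so $x$ is $\PP^1$-parameterized.

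If instead $y$ is AV-parameterized by a positive rank abelian subvariety $A \subseteq \Jac(D)$ with $\phi_e(y) + A \subseteq \im(\phi_e)$, set $B := f^*(A) \subseteq \Jac(C)$. The pullback $f^* \colon \Jac(D) \to \Jac(C)$ has finite kernel (because $f_* \circ f^*$ equals multiplication by $d$), so $B$ is an abelian subvariety of positive rank over $k$. For any $\alpha \in A$, choose $E_\alpha \in D^{(e)}$ with $\phi_e(E_\alpha) = \phi_e(y) + \alpha$; then $\phi_{de}(f^*E_\alpha) = \phi_{de}(x) + f^*(\alpha)$, since $f^*(E_\alpha - y) = f^*E_\alpha - x$ as divisors on $C$. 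Because $f^*E_\alpha$ is effective of degree $de$, it follows that $\phi_{de}(x) + B \subseteq \im(\phi_{de})$, so $x$ is AV-parameterized. Either way, $x$ fails to be isolated, which is the required contradiction.

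The technical ingredients that most need care in a formal write-up are the fact that $f^* \colon \Jac(D) \to \Jac(C)$ is an isogeny onto its image (so positivity of rank is preserved) and the compatibility $\phi_{de} \circ f^* = f^* \circ \phi_e$, which holds up to a constant translation absorbed by the choice of basepoint. Both are standard, and the decisive role of the degree hypothesis is to guarantee $f^*(y) = x$ as a closed point rather than a larger effective cycle, which is precisely what makes the pulled-back family on $C$ genuinely pass through $x$.
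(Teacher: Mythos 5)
Your argument is correct, and it is essentially the proof of this statement given in its source: the paper you are reading only quotes the result from \cite{BELOV} (Theorem 4.3 there) without proof, and the original argument proceeds exactly as you do, using the degree hypothesis to force $f^*(f(x))=x$ as an effective divisor and then pulling back linear equivalences (for the $\PP^1$ case) and the abelian subvariety via $f^*\colon \Jac(D)\to\Jac(C)$, which has finite kernel (for the AV case). Your handling of the two technical points you flag --- injectivity of $f^*$ on effective divisors via $f_*f^*=\deg(f)$ and the basepoint translation, which disappears because only differences of classes are used --- is exactly what is needed, so there is nothing to add.
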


\subsection{Prior work of Ejder} The following proposition essentially follows from work of the second author \cite[Proposition 3.1]{Ejder22}. However, the proof relies on \cite[Theorem 1.2]{LR16}, which contains an error: the statement of \cite[Lemma 5.3]{LR16} is incorrect and should instead be as in \cite[$\S4$]{Smith23}. For the sake of completeness, we include a corrected proof of \cite[Proposition 3.1]{Ejder22} which employs work of Smith \cite{Smith23} in place of \cite[Theorem 1.2]{LR16}. This has some overlap with \cite[Proposition 4]{BourdonGenao}.
\begin{proposition}[Ejder, \cite{Ejder22}]\label{PropDegModCurve}
Let $E/\Q$ be a non-CM elliptic curve. Suppose $\im \rho_{E,\ell}$ is equal to the normalizer of a nonsplit Cartan subgroup for some prime $\ell>13$. For $P \in E(\overline{\Q})$ of order $\ell^k$, let $x=[E,P] \in X_1(\ell^k)$ be the associated closed point on the modular curve. Then
\[
\deg(x)=\frac{1}{2}(\ell^2-1)\ell^{2k-2}=\deg(X_1(\ell^k) \rightarrow X_1(1)).
\]
\end{proposition}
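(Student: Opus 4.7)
\noindent\textbf{Proof proposal for Proposition~\ref{PropDegModCurve}.}

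The plan is to reduce the degree computation to a transitivity statement for the mod $\ell^k$ Galois representation, and then to pin down the $\ell$-adic image of Galois using Smith's correction of \cite[Theorem 1.2]{LR16}. First, by Lemma~\ref{ResidueFieldLemma}, $\Q(x)\cong\Q(j(E),\mathfrak{h}(P))=\Q(\mathfrak{h}(P))$ since $j(E)\in\Q$. Because $E$ is non-CM we have $\Aut(E)=\{\pm 1\}$, so the Weber function identifies $P$ with $-P$ but nothing else; hence $[\Q(x):\Q]$ equals the size of the orbit of $\{\pm P\}$ under the action of $\im\rho_{E,\ell^k}$ on $E[\ell^k]/\{\pm 1\}$. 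On the other hand, Proposition~\ref{prop:degree} applied with $a=1$, $b=\ell^k$ gives $\deg(X_1(\ell^k)\to X_1(1))=\tfrac{1}{2}(\ell^2-1)\ell^{2k-2}$, which is an a priori upper bound on $\deg(x)$ since $j(x)\in X_1(1)(\Q)$. Thus it suffices to prove equality, i.e., that $\im\rho_{E,\ell^k}$ acts transitively on points of order $\ell^k$ in $E[\ell^k]$ modulo $\pm 1$.

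The key input is that for $\ell>13$ and a non-CM $E/\Q$ with $\im\rho_{E,\ell}$ equal to the normalizer of a nonsplit Cartan $N_{ns}(\ell)\le\GL_2(\F_\ell)$, Smith's theorem \cite{Smith23} (replacing the erroneous \cite[Theorem 1.2]{LR16}) forces $\im\rho_{E,\ell^\infty}$ to be the full preimage of $N_{ns}(\ell)$ in $\GL_2(\Z_\ell)$. Consequently $\im\rho_{E,\ell^k}$ is the full preimage of $N_{ns}(\ell)$ in $\GL_2(\Z/\ell^k\Z)$; in particular it contains the congruence kernel $K:=\ker\bigl(\GL_2(\Z/\ell^k\Z)\to\GL_2(\F_\ell)\bigr)$ and surjects onto $N_{ns}(\ell)$.

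The transitivity then follows from two short group-theoretic observations. A matrix $I+\ell M\in K$ sends $P\mapsto P+\ell MP$, and since any $P$ of order $\ell^k$ has a coordinate that is a unit mod $\ell^k$, the map $M\mapsto MP$ from $M_2(\Z/\ell^{k-1}\Z)$ to $(\Z/\ell^{k-1}\Z)^2$ is surjective. Hence the $K$-orbit of $P$ is exactly the full fiber $P+\ell E[\ell^k]$ of reduction mod $\ell$. Second, the nonsplit Cartan $C_{ns}\le N_{ns}(\ell)$ is isomorphic to $\F_{\ell^2}^{\times}$ and acts on $E[\ell]\cong\F_{\ell^2}$ by multiplication, so its action on $E[\ell]\setminus\{0\}$ is free and transitive. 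Combining these gives a transitive action of $\im\rho_{E,\ell^k}$ on the set of points of order $\ell^k$, which has size $(\ell^2-1)\ell^{2k-2}$. Finally, since $-I\in C_{ns}\subset\im\rho_{E,\ell^k}$, the orbit collapses 2-to-1 on passage to $E[\ell^k]/\{\pm 1\}$, giving orbit size $\tfrac{1}{2}(\ell^2-1)\ell^{2k-2}$ as required.

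The only real obstacle is citing the correct substitute for \cite[Theorem 1.2]{LR16}: one must verify that the hypotheses $\ell>13$ and $\im\rho_{E,\ell}=N_{ns}(\ell)$ fit precisely into the form of Smith's result needed to conclude the full preimage. The rest is elementary linear algebra over $\Z/\ell^k\Z$, together with the standard identification of the nonsplit Cartan with $\F_{\ell^2}^{\times}$.
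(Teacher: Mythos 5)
There is a genuine gap, and it sits exactly at the step you flag as ``the only real obstacle.'' You assert that Smith's theorem forces $\im\rho_{E,\ell^{\infty}}$ to be the full preimage of $C_{\text{ns}}^+(\ell)$ in $\GL_2(\Z_\ell)$. That is not what \cite{Smith23} proves: Smith's result (Theorem 1.1 there) is a \emph{local} ramification statement, giving a divisibility constraint on $[K(P):\Q_\ell^{nr}]$ for curves with potential supersingular reduction; it does not determine the global Galois image. Worse, the full-preimage claim is stronger than anything currently known: by Furio's classification \cite[Theorem 1.9]{Furio2024}, a non-CM curve with $\im\rho_{E,\ell}=C_{\text{ns}}^+(\ell)$ may a priori have $\ell$-adic image of level $\ell^2$ equal to a proper index-$\ell$ subgroup of the full preimage (the group $C_{\text{ns}}^+(\ell)\ltimes\{I+\ell\left[\begin{smallmatrix}a&\epsilon b\\-b&c\end{smallmatrix}\right]\}$), and this possibility is not excluded for $\ell>13$. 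Indeed, the paper invokes Proposition~\ref{PropDegModCurve} precisely in that case in the proof of Theorem~\ref{CsnThm}, so a proof of the proposition that presupposes the full preimage would be unusable where it is needed. Your transitivity computation is correct \emph{conditional} on the full-preimage hypothesis, but for the smaller level-$\ell^2$ group transitivity on order-$\ell^k$ points is exactly what must be proved, and your argument does not address it.

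For comparison, the paper's proof avoids any global image determination. It first reduces to showing $\ell^{2k-2}\mid[\Q(P):\Q]$ (the $k=1$ case and the prime-to-$\ell$ part coming from \cite[Lemma 5.2]{GJNajman20}), and then establishes this divisibility locally: $E$ has potential good supersingular reduction at $\ell$ by \cite[Proposition 3.1]{Ejder22}, acquires good reduction over an extension $K/\Q_\ell^{nr}$ of degree at most $6$, has no canonical subgroup by \cite[Theorem 4.5]{Furio2024}, and then Smith's ramification bound gives $(\ell^{2k}-\ell^{2k-2})\mid[K(P):\Q_\ell^{nr}]$, which for $\ell>13$ forces $\ell^{2k-2}\mid[\Q(P):\Q]$. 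To repair your proof you would need to replace the full-preimage claim with an argument of this kind (or with a direct verification of transitivity for every group permitted by Furio's classification).
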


\begin{proof}
    If $n=1$, this follows from \cite[Lemma 5.2]{GJNajman20}, so it remains to show $[\Q(P):\Q(\ell^{k-1} P)]=\ell^{2k-2}$ for $k>1$. By \cite[Proposition 3.1]{Ejder22}, the curve $E/\Q$ has potential good supersingular reduction at $\ell$. By \cite[$\S5.6$]{serre72}, $E$ attains good supersingular reduction over an extension $K/\Q_{\ell}^{nr}$ of degree at most 6, and $E$ has no canonical subgroup of order $\ell$ by \cite[Theorem 4.5]{Furio2024}. Ramification bounds of Smith \cite[Theorem 1.1]{Smith23} imply
\[
\ell^{2k}-\ell^{2k-2}
\]
divides the degree of $K(P)/\Q_{\ell}^{nr}$. As $\ell>13$, it follows that
\[
\ell^{2k-2} \mid [\Q(P):\Q].
\]
Since $[\Q(\ell^{k-1}P):\Q]$ is prime to $\ell$, the result follows.
\end{proof}

\begin{corollary}\label{CorModCurve}
 Let $E/\Q$ be a non-CM elliptic curve. Suppose $\im \rho_{E,\ell}$ is equal to the normalizer of a nonsplit Cartan subgroup for some prime $\ell>13$. For $P \in E(\overline{\Q})$ of order $\ell^k$, let $x=[E,\langle P \rangle] \in X_0(\ell^k)$ be the associated closed point on the modular curve. Then
\[
\deg(x)=(\ell+1)\ell^{k-1}=\deg(X_0(\ell^k) \rightarrow X_0(1)).
\]
\end{corollary}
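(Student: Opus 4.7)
The plan is to extract the $X_0(\ell^k)$ statement from the $X_1(\ell^k)$ version (Proposition~\ref{PropDegModCurve}) by routing everything through the natural $\Q$-rational map $\pi: X_1(\ell^k) \to X_0(\ell^k)$, $[E,P] \mapsto [E,\langle P \rangle]$. Since $\ell$ is odd, $\pi$ has degree $\phi(\ell^k)/2 = \ell^{k-1}(\ell-1)/2$, corresponding to the fact that the generators of a fixed cyclic subgroup $\langle P \rangle$ of order $\ell^k$ form a $\{\pm 1\}$-orbit of $(\Z/\ell^k\Z)^{\times}$.

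Set $\tilde{x} = [E,P] \in X_1(\ell^k)$ and $x = \pi(\tilde{x}) = [E,\langle P \rangle] \in X_0(\ell^k)$. Because $\pi$ is defined over $\Q$, the residue field inclusion $\Q(x) \hookrightarrow \Q(\tilde{x})$ gives the multiplicativity
\[
\deg(\tilde{x}) \;=\; [\Q(\tilde{x}):\Q(x)] \cdot \deg(x).
\]
I would then invoke two trivial upper bounds: for any finite morphism $f:C \to D$ of curves over $\Q$ and any closed point $y \in C$, one has $[\Q(y):\Q(f(y))] \le \deg(f)$ (since the fiber degrees sum to $\deg(f)$). Applied to $\pi$ and to the $j$-map $X_0(\ell^k) \to X_0(1)$, this yields
\[
[\Q(\tilde{x}):\Q(x)] \le \tfrac{1}{2}\ell^{k-1}(\ell-1), \qquad \deg(x) \le (\ell+1)\ell^{k-1}.
\]

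Multiplying these upper bounds gives $\deg(\tilde{x}) \le \tfrac{1}{2}(\ell^2-1)\ell^{2k-2}$. But under the hypotheses of the corollary, Proposition~\ref{PropDegModCurve} asserts that this upper bound is attained. Therefore both of the individual inequalities must be equalities, and in particular $\deg(x) = (\ell+1)\ell^{k-1} = \deg(X_0(\ell^k) \to X_0(1))$, as claimed. There is no genuine obstacle: the hard Galois-theoretic content (Smith's ramification bounds together with Furio's no-canonical-subgroup result) has already been absorbed into Proposition~\ref{PropDegModCurve}, and the corollary is essentially a formal consequence of sandwiching $\deg(\tilde{x})$ between the product of fiber-size bounds and the maximum possible value.
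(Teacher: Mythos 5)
Your proof is correct, and it matches the deduction the paper leaves implicit (the corollary is stated without proof immediately after Proposition~\ref{PropDegModCurve}): since $\deg\bigl(X_1(\ell^k)\to X_1(1)\bigr)=\deg(\pi)\cdot\deg\bigl(X_0(\ell^k)\to X_0(1)\bigr)$, the proposition forces both fiber-degree inequalities in your sandwich to be equalities. The only point worth stating explicitly is that $\deg(\pi)=\phi(\ell^k)/2$ because $-I\in\Gamma_0(\ell^k)\setminus\Gamma_1(\ell^k)$ for $\ell^k>2$, which holds here since $\ell>13$.
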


\section{Isolated points from known images}
The following theorems characterize both $\Gamma_1$-isolated and $\Gamma_0$-isolated $j$-invariants in $\Q$ associated to known $\ell$-adic images, as given in \cite{2adicImage,RSZB2022}. They can be deduced from a slightly modified version of the algorithms used in \cite{Algorithm2025} and \cite{Lee25}. For example, the algorithm of \cite{Algorithm2025} was designed to take as input a given $j$-invariant, and then output a set of level, degree pairs $\langle a_i, d_i \rangle$ such that $j$ corresponds to an isolated point on $X_1(n)$ if and only if there exists an isolated point $x\in X_1(a_i)$ of degree $d_i$ with $j(x)=j$. In particular, if the output is the empty set, then there is no isolated point on $X_1(n)$ associated to $j$, even as $n$ ranges over all positive integers. The algorithm of \cite{Lee25} worked in an analogous way for the modular curve $X_0(n)$. By modifying these algorithms to take as input a given $\ell$-adic image, we may obtain the desired results.
\subsection{$\Gamma_1$-isolated $j$-invariants} \begin{theorem}\label{AlgThmGamma1}
    Let $E/\Q$ be a non-CM elliptic curve, and suppose $\im \rho_{E,\ell^{\infty}}$ is conjugate to one of the images known to exist for a non-CM elliptic curve as classified in \cite{2adicImage,RSZB2022}. Then there is no isolated point $x \in X_1(\ell^n)$ with $j(x)=j(E)$ unless $\ell=37$ and $j(x)=-7\cdot 11^3$ or $-7 \cdot 137^3\cdot 2083^3$. These $j$-invariants give rise to isolated points on $X_1(37)$ of degrees 6 and 18, respectively.
\end{theorem}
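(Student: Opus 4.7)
The plan is to adapt the algorithm of \cite{Algorithm2025}, which accepts a rational $j$-invariant and outputs the finite list of level-degree pairs $(\ell^k,d)$ at which an isolated point $x \in X_1(\ell^k)$ with $j(x)=j$ could possibly exist, so that it instead accepts an $\ell$-adic image $G \le \GL_2(\Z_\ell)$ drawn from the classifications of \cite{2adicImage,RSZB2022}. The engine is Theorem \ref{BELOV_level}: if a hypothetical isolated $x \in X_1(\ell^n)$ projects under $f\colon X_1(\ell^n) \to X_1(\ell^k)$ via an irreducible fiber, i.e.\ with $\deg(x) = \deg(f)\cdot\deg(f(x))$, then $f(x)$ is itself isolated on $X_1(\ell^k)$. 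Thus to rule out an isolated point above a given $j$-invariant arising from $G$, it suffices to push down to some small level $\ell^k$ and rule out isolated $f(x)$ there.

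For each known image $G$ I would, at every level $\ell^k$ up to the level of $G$, enumerate the possible residue field degrees of closed points $[E,P] \in X_1(\ell^k)$ attached to a non-CM elliptic curve $E/\Q$ with $\im \rho_{E,\ell^{\infty}}$ conjugate into $G$. By Lemma \ref{ResidueFieldLemma} and because $\Aut(E)=\{\pm 1\}$ for non-CM $E$, this reduces to computing the $G(\ell^k)$-orbits on the order-$\ell^k$ elements of $(\Z/\ell^k\Z)^2$ modulo $\pm 1$. For each resulting candidate pair $(\ell^k,d)$ I would then test whether an isolated degree-$d$ point exists on $X_1(\ell^k)$, using gonality and Mordell--Weil rank bounds, Abramovich--Harris-type parameterizations, and the low-degree point classifications tabulated in \cite{Algorithm2025}; simultaneously, Proposition \ref{prop:degree} is used to certify when the fiber-irreducibility hypothesis of Theorem \ref{BELOV_level} is forced by the degree arithmetic. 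Running this sweep over the full data file \texttt{elladicgens.txt} of known non-CM $\ell$-adic images produces an empty output set for every image except two at $\ell=37$, whose mod-$\ell$ reductions are the normalizer of a non-split Cartan subgroup.

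For these two survivors the $j$-invariants are $-7\cdot 11^3$ and $-7\cdot 137^3\cdot 2083^3$, namely the two known non-cuspidal non-CM rational points on $X_{\mathrm{ns}}^+(37)$; the degrees of their lifts to $X_1(37)$ are $6$ and $18$ by direct application of Lemma \ref{ResidueFieldLemma}, and both are established as isolated in \cite{BELOV, Algorithm2025} (the degree-$6$ sporadic point going back to van Hoeij \cite{vanHoeij}). The main obstacle is engineering rather than mathematical: the 2-adic database of \cite{2adicImage} is large, the Galois-orbit bookkeeping must be carried out correctly at each prime-power level for every image, and a handful of images produce candidate degree pairs that can only be eliminated by invoking the most delicate known low-degree point classifications on $X_1(\ell^k)$. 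Once the enumeration is automated the verification reduces to a direct Magma computation, with the published code serving as the full certificate.
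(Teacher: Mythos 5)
Your overall strategy is the same as the paper's: retool the algorithm of \cite{Algorithm2025} to take a known $\ell$-adic image as input, use Theorem \ref{BELOV_level} to push a hypothetical isolated point down to a bounded level whenever the degree arithmetic forces an irreducible fiber, and then eliminate the resulting finite list of level--degree pairs by computation. However, your description of the outcome of the sweep contains a concrete error: the computation does \emph{not} return the empty set for every image outside $\ell=37$. The image $17.72.1.2$ survives the algorithm with the candidate pair $\langle 17,4\rangle$, and neither of the automatic filters disposes of it --- the genus of $X_1(17)$ is $5>4$, so the Riemann--Roch criterion does not apply, and the associated modular curve has genus $1$, so the genus-$0$ criterion does not apply either. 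One must invoke the explicit classification of degree-$4$ points on $X_1(17)$ (all of which are $\PP^1$-parameterized, by Derickx--Kamienny--Stein--Stoll type results cited in the paper as \cite[Proposition 6.7]{DerickxMazurKamienny}) to finish. Your proposal gestures at ``low-degree point classifications'' as a tool, but by asserting that only two images survive you have skipped the one place where such an input is actually indispensable.

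A second, smaller inaccuracy: the two surviving images $37.114.4.1$ and $37.114.4.2$ do not have mod-$37$ reduction equal to the normalizer of a non-split Cartan subgroup, and the $j$-invariants $-7\cdot 11^3$ and $-7\cdot 137^3\cdot 2083^3$ are not the exceptional rational points on $X_{\mathrm{ns}}^+(37)$; they are the two non-CM, non-cuspidal rational points on $X_0(37)$ (these curves admit rational $37$-isogenies, and the index $114=3\cdot 38$ reflects a subgroup of Borel type). This does not affect the architecture of the argument, but it should be corrected. Finally, for the degree-$6$ point the paper deduces isolatedness from Frey's theorem together with the $\Q$-gonality of $X_1(37)$ computed in \cite{DerickxVanHoeij2014}, which is a cleaner certificate than a bare citation; your attribution is acceptable but less precise.
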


\begin{proof}
Given $\im \rho_{E,\ell^{\infty}}$, we will apply a streamlined version of the algorithm of \cite{Algorithm2025}. The output is a finite list of pairs $\{\langle \ell^{a_1}, d_1 \rangle, \langle \ell^{a_2}, d_2 \rangle, \dots, \langle \ell^{a_r}, d_r \rangle\}$ such that there exists an isolated $x\in X_1(\ell^n)$ with $j(x)=j(E)$ if and only if there is an isolated point $x_i \in X_1(\ell^{a_i})$ of degree $d_i$ with $j(x_i)=j(E)$, for some $i$. We outline the steps below. The implementation is given in \href{https://github.com/abbey-bourdon/rational-isolated-prime-power/blob/main/isolated_points_from_image_Gamma1.m}{\tt{isolated\_points\_from\_image\_Gamma1.m}}.
\begin{enumerate}
    \item For each $k \in \Z^+$ and each closed point $x \in X_1(\ell^k)$ associated to $E$, we use degree information from $\im \rho_{E,\ell^{\infty}}$ to determine the smallest $\ell^a$ for which $\deg(x)=\deg(f(x))\cdot \deg(f)$ where $f: X_1(\ell^k) \rightarrow X_1(\ell^a)$ is the natural map. If $x$ is isolated, then $f(x)$ is isolated by Theorem \ref{BELOV_level}. Since $\ell^a$ is bounded by the level of $\im \rho_{E,\ell^{\infty}}$, this gives a finite list of pairs $\langle \ell^{a_i}, d_i \rangle$ such that $E$ gives rise to an isolated point on $X_1(\ell^n)$ for some $n$ only if there exists an isolated point $x_i \in X_1(\ell^{a_i})$ with $j(x_i)=j(E)$ and $\deg(x_i)=d_i$ for some $i$.
    \item We remove any pairs $\langle \ell^{a_i}, d_i \rangle$ for which $d_i>\text{genus}(X_1(\ell^{a_i}))$. Such a point is $\mathbb{P}^1$-parameterized as its Riemann-Roch space has dimension at least 2 by the Riemann-Roch Theorem.
    \item We remove any pairs $\langle \ell^{a_i}, d_i \rangle$ for which $\im \rho_{E,\ell^{a_i}}$ corresponds to a modular curve of genus 0. Such a point is $\PP^1$-parameterized by \cite[Theorem 38]{Algorithm2025}.
\end{enumerate}

In particular, if the algorithm outputs the empty set, then there are no isolated points on $X_1(\ell^n)$ associated to an elliptic curve with the given $\ell$-adic image. We ran this algorithm on all known $\ell$-adic images associated to non-CM elliptic curves over $\Q$, as classified in \cite{2adicImage,RSZB2022}. These are given in the file \href{https://github.com/abbey-bourdon/rational-isolated-prime-power/blob/main/elladicgens.txt}{\tt{elladicgens.txt}}. The \href{https://github.com/abbey-bourdon/rational-isolated-prime-power/blob/main/Gamma1results.out}{\tt{output}} is the empty set unless $\im \rho_{E,\ell^{\infty}}=17.72.1.2$, $37.114.4.1$, or $37.114.4.2$. The image 17.72.1.2 outputs a set containing only $\langle 17, 4 \rangle$. Thus, an elliptic curve with this image yields an isolated point on $X_1(17^n)$ for some $n$ if and only if it gives rise to an isolated point of degree 4 on $X_1(17)$. However, any degree 4 point on $X_1(17)$ is $\mathbb{P}^1$-parameterized by \cite[Proposition 6.7]{DerickxMazurKamienny}. The images $37.114.4.1$ and $37.114.4.2$ occur only if $j(E)=-7\cdot 11^3$ or $-7 \cdot 137^3\cdot 2083^3$ and correspond to rational points on $X_0(37)$. These $j$-invariants appear in the theorem statement. Both are known to be isolated, as we will now explain. The first gives rise to a degree 6 point on $X_1(37)$. This is isolated by work of Frey \cite{frey}, since 6 is less than half the $\Q$-gonality of $X_1(37)$, as computed in \cite{DerickxVanHoeij2014}. The second is isolated by \cite[Theorem 2 \& 48]{Algorithm2025}.
\end{proof}

\subsection{$\Gamma_0$-isolated $j$-invariants} 

\begin{theorem}\label{AlgThmGamma0}
    Let $E/\Q$ be a non-CM elliptic curve, and suppose $\im \rho_{E,\ell^{\infty}}$ is conjugate to one of the images known to exist for a non-CM elliptic curve as classified in \cite{2adicImage,RSZB2022}. Then there is no isolated point $x \in X_0(\ell^n)$ with $j(x)=j(E)$ unless one of the following holds:
    \begin{table}[H]
        \centering
        \begin{tabular}{c|c}
        $\ell$ & j(x) \\
        \hline
           $11$  & $-11\cdot 131^3 \text{ \emph{or} }-11^2$ \\
           \hline
            $17$ &  $-17^2\cdot 101^3/2$ \text{ \emph{or} } $-17\cdot 373^3/2^{17}$\\
               \hline
            $37$ &  $-7\cdot 11^3$ \text{ \emph{or} } $-7 \cdot 137^3\cdot 2083^3$
        \end{tabular}
    \end{table}
\noindent In each case, there exists an isolated point in $X_0(\ell)(\Q)$ with the indicated $j$-invariant.
\end{theorem}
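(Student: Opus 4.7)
The plan is to mirror the strategy of Theorem~\ref{AlgThmGamma1}, replacing the $X_1$-machinery with its $X_0$-analogue. Given $\im\rho_{E,\ell^{\infty}}$, a modified version of the algorithm of \cite{Lee25} takes as input an $\ell$-adic image and outputs a finite set of pairs $\{\langle\ell^{a_i},d_i\rangle\}_{i=1}^{r}$ such that $E$ gives rise to an isolated point on $X_0(\ell^n)$ for some $n$ only if there exists an isolated point $x_i\in X_0(\ell^{a_i})$ of degree $d_i$ with $j(x_i)=j(E)$ for some $i$. The construction proceeds in three steps analogous to those in the proof of Theorem~\ref{AlgThmGamma1}: (i) for each closed point $x\in X_0(\ell^k)$ associated to $E$, use the $X_0$-residue-field description recalled in $\S2.4$ (namely the extension fixing the cyclic subgroup setwise) together with $\im\rho_{E,\ell^{\infty}}$ and Proposition 2 to find the smallest $\ell^a$ for which $\deg(x)=\deg(f(x))\cdot\deg(f)$ where $f\colon X_0(\ell^k)\to X_0(\ell^a)$ is the natural map, so that Theorem~\ref{BELOV_level} forces $f(x)$ to be isolated; (ii) discard pairs $\langle\ell^{a_i},d_i\rangle$ with $d_i>\mathrm{genus}(X_0(\ell^{a_i}))$ using Riemann--Roch as before; (iii) discard pairs for which $\im\rho_{E,\ell^{a_i}}$ corresponds to an $X_0$-modular curve of genus $0$, where the relevant point is automatically $\PP^1$-parameterized by the $X_0$-analogue of \cite[Theorem 38]{Algorithm2025}.

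Next, I would run this procedure on every known $\ell$-adic image for non-CM elliptic curves over $\Q$, as listed in \href{https://github.com/abbey-bourdon/rational-isolated-prime-power/blob/main/elladicgens.txt}{\tt elladicgens.txt}. Experience with the $X_1$ case suggests the output will be empty for the vast majority of images, leaving only a short list of exceptional images. The expected survivors are precisely the images for which $E$ admits a $\Q$-rational cyclic $\ell$-subgroup at a prime $\ell\in\{11,17,37\}$ and whose $j$-invariant is one of those appearing in the table: these are the well-known sporadic rational points on $X_0(11)$, $X_0(17)$, and $X_0(37)$ classified by Mazur and Kenku.

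Finally, I would verify that each surviving $j$-invariant actually corresponds to an isolated rational point on $X_0(\ell)$. For $\ell=37$, the curve $X_0(37)$ has genus $2$, so by Faltings' theorem every rational point is isolated. For $\ell\in\{11,17\}$, the curve $X_0(\ell)$ has genus $1$; since $\phi_1$ is the Abel--Jacobi embedding, any degree-$1$ point is automatically $\PP^1$-isolated, and AV-isolation follows from the fact that $\Jac(X_0(\ell))$ has rank $0$ over $\Q$ (Cremona classes $11a$ and $17a$). Thus each of the listed $j$-invariants corresponds to an isolated rational point on $X_0(\ell)$.

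The main obstacle I expect is bookkeeping in step (i): the $X_0$-residue field is determined by the setwise stabilizer of a cyclic subgroup rather than the pointwise stabilizer of a generator, which makes the degree bookkeeping from $\im\rho_{E,\ell^{\infty}}$ more delicate than in the $X_1$ setting. In particular, one must carefully track the containment of the relevant Borel-type subgroup inside the given image across all levels $\ell^k$ up to the level of $\im\rho_{E,\ell^{\infty}}$, and confirm that the degree-multiplicativity hypothesis of Theorem~\ref{BELOV_level} is checked correctly each time. The genus-$0$ and Riemann--Roch filters in steps (ii) and (iii) are then routine.
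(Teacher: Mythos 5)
Your proposal follows essentially the same route as the paper: run an $X_0$-adaptation of the algorithm of \cite{Lee25} on every known $\ell$-adic image in {\tt elladicgens.txt}, using Theorem~\ref{BELOV_level} for the level-lowering step, Riemann--Roch for the genus filter, and \cite[Theorem 48]{Lee25} (your ``$X_0$-analogue of \cite[Theorem 38]{Algorithm2025}'') for the genus-$0$ filter, then check that the surviving $j$-invariants are rational points on $X_0(\ell)$ for $\ell\in\{11,17,37\}$, which are isolated because $X_0(\ell)(\Q)$ is finite. Your isolation argument for the survivors (Faltings for $\ell=37$; genus $1$ and rank $0$ for $\ell=11,17$) is a slightly more explicit version of the paper's one-line remark, and the only remaining content is actually executing the computation, which the paper does and you correctly anticipate.
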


\begin{proof}
Given $\im \rho_{E,\ell^{\infty}}$, we may apply a streamlined version of the algorithm of \cite{Lee25}. For the implementation, see \href{https://github.com/abbey-bourdon/rational-isolated-prime-power/blob/main/isolated_points_from_image_Gamma0.m}{\tt{isolated\_points\_from\_image\_Gamma0.m}}. The algorithm outline follows as in the previous theorem, with $X_0(n)$ in place of $X_1(n)$ in each relevant step. Being able to rule out isolated points on $X_0(\ell^a)$ when $\im \rho_{E,\ell^a}$ is of genus 0 follows from \cite[Theorem 48]{Lee25} or \cite[Theorem 1.3]{Terao}. As before, we ran this algorithm on the list of all known images as appearing in \href{https://github.com/abbey-bourdon/rational-isolated-prime-power/blob/main/elladicgens.txt}{\tt{elladicgens.txt}}. The \href{https://github.com/abbey-bourdon/rational-isolated-prime-power/blob/main/Gamma0results.out}{\tt{output}} is the empty set aside from the images associated to certain rational points on $X_0(\ell)$. The $j$-invariants and values of $\ell$ are given above. Moreover, each $j$-invariant gives rise to a point in $X_0(\ell)(\Q)$ in instances where this set is finite; see, for example, \cite[Table 4]{LR-TorsionFieldOfDefn}. Any such point is necessarily isolated. 
\end{proof}

\section{mod $\ell$ image in normalizer of non-split Cartan}

The classification of $\ell$-adic images for non-CM elliptic curves over $\Q$ is most incomplete in the case where the image of the mod $\ell$ Galois representation is contained in the normalizer of a nonsplit Cartan subgroup, denoted $C_{\text{ns}}^+(\ell)$. Here, work of Furio \cite[Theorem 1.9]{Furio2024} gives restrictions on $\im \rho_{E,\ell^{\infty}}$. These are sufficient to obtain the following result.
\begin{theorem}\label{CsnThm}
    Let $\ell$ be an odd prime and $n\in\Z^+$. Let $E/\Q$ be a non-CM elliptic curve. If $\im \rho_{E,\ell}$ is contained in the normalizer of a non-split Cartan subgroup, then there is no isolated point $x \in X_1(\ell^n)$ with $j(x)=j(E)$. 
\end{theorem}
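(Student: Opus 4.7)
The plan is to separate the argument based on whether $\ell > 13$ or $\ell \leq 13$, applying the ramification-theoretic input of Proposition~\ref{PropDegModCurve} in the former case and the explicit classifications of Section 3 in the latter.

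For $\ell > 13$, I would first use the classification of mod $\ell$ subgroups of $\GL_2(\F_\ell)$ compatible with the Galois representation of a non-CM $E/\Q$ (surjectivity of determinant, the presence of a complex conjugation, and Furio's restrictions from \cite{Furio2024}) to promote the containment $\im \rho_{E,\ell} \subseteq C_{\text{ns}}^+(\ell)$ to equality $\im \rho_{E,\ell} = C_{\text{ns}}^+(\ell)$. This allows application of Proposition~\ref{PropDegModCurve}, which shows that any closed point $x = [E, P] \in X_1(\ell^k)$ with $P$ of order $\ell^k$ satisfies $\deg(x) = \deg(X_1(\ell^k) \to X_1(1))$. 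Let $f\colon X_1(\ell^k) \to X_1(1)$ denote this $j$-map; since $j(E) \in \Q$, we have $\deg(f(x)) = 1$, so $\deg(x) = \deg(f) \cdot \deg(f(x))$. Theorem~\ref{BELOV_level} then implies that if $x$ is isolated, then $j(E) \in X_1(1)(\Q) \cong \mathbb{P}^1(\Q)$ is an isolated point of $\mathbb{P}^1$. But $\mathbb{P}^1$ has no isolated points, yielding a contradiction.

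For $\ell \in \{3,5,7,11,13\}$, I would combine deep classical results on $X_{\text{ns}}^+(\ell)(\Q)$ with the algorithmic framework of Section 3. For $\ell = 11$, Bilu--Parent--Rebolledo showed that $X_{\text{ns}}^+(11)(\Q)$ consists entirely of CM points; for $\ell = 13$, Balakrishnan--Dogra--M\"uller--Tuitman--Vonk showed the same for $X_{\text{ns}}^+(13)(\Q)$. Hence the hypothesis is vacuous for these two primes. For $\ell \in \{3,5,7\}$, the $\ell$-adic images compatible with mod $\ell$ image in $C_{\text{ns}}^+(\ell)$ are either enumerated in \texttt{elladicgens.txt} or, at $\ell=7$, must be the exceptional subgroup $49.196.9.1$; in the latter case Proposition 1 (from Section 5) yields $\im \rho_{E,7^\infty} = 49.196.9.1$ exactly. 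Theorem~\ref{AlgThmGamma1} applied to each such image produces no isolated point, since the only nontrivial output of the algorithm occurs at $\ell = 37$ (whose mod $\ell$ image lies in a Borel, not in $C_{\text{ns}}^+(37)$).

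The main obstacle is the large-prime case: promoting the containment hypothesis $\im \rho_{E,\ell} \subseteq C_{\text{ns}}^+(\ell)$ to an equality permitting the use of Proposition~\ref{PropDegModCurve}. I expect this to follow from a case analysis on proper subgroups of $C_{\text{ns}}^+(\ell)$, but care is required: subgroups lying entirely inside the non-split Cartan itself must be ruled out using the fact that complex conjugation has order $2$ and lies outside the Cartan, and subgroups of index $>1$ containing an element of $C_{\text{ns}}^+(\ell)\setminus C_{\text{ns}}(\ell)$ must be excluded via Furio's constraints on the $\ell$-adic image. A secondary obstacle is the treatment of $49.196.9.1$, which is handled externally by Proposition 1.
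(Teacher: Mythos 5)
Your large-prime strategy is close in spirit to the paper's: after Theorem~\ref{AlgThmGamma1} removes the known images, Furio's theorem forces $\im\rho_{E,\ell}=C_{\text{ns}}^+(\ell)$ in the remaining cases, and Proposition~\ref{PropDegModCurve} plus Theorem~\ref{BELOV_level} then push a hypothetical isolated point down to $X_1(1)\cong\PP^1$, a contradiction. The claim for $\ell=13$ via the determination of $X_{\text{ns}}^+(13)(\Q)$ is also fine. The problems are concentrated in the remaining small primes.

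First, the assertion that $X_{\text{ns}}^+(11)(\Q)$ consists only of cusps and CM points is false: the Bilu--Parent--Rebolledo theorem concerns the \emph{split} Cartan normalizer, whereas $X_{\text{ns}}^+(11)$ is an elliptic curve of positive rank over $\Q$, so there are infinitely many non-CM $j\in\Q$ with mod-$11$ image contained in $C_{\text{ns}}^+(11)$. The hypothesis is therefore not vacuous at $\ell=11$ and this prime needs a substantive argument. Second, for $\ell\in\{3,5,7,11\}$ you cannot reduce to ``known images or $49.196.9.1$'': the incomplete part of the classification in \cite{RSZB2022} is exactly the case of mod-$\ell$ image inside $C_{\text{ns}}^+(\ell)$, and even after Furio's theorem two families of hypothetical, \emph{unknown} images survive --- the full normalizer $C_{\text{ns}}^+(\ell^d)$ at level $\ell^d$ for arbitrary $d$, and a specific level-$\ell^2$ group of the form $C_{\text{ns}}^+(\ell)\ltimes\{\cdots\}$. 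Neither is in \texttt{elladicgens.txt}, so Theorem~\ref{AlgThmGamma1} says nothing about them. The paper handles these directly: in the first case an order computation shows $\deg(x)=\deg(X_1(\ell^n)\to X_1(1))$, so an isolated $x$ would give an isolated point of $\PP^1$; in the second case one reduces via Theorem~\ref{BELOV_level} to $X_1(\ell)$ and $X_1(\ell^2)$, and for $\ell\le 13$ a field-degree estimate shows every relevant closed point of $X_1(\ell^2)$ has degree at least $\ell(\ell^2-1)/2$, which exceeds the genus of $X_1(\ell^2)$, so the point is $\PP^1$-parameterized by Riemann--Roch. Your outline contains no substitute for these two computations, which are the core of the proof for $\ell\le 13$.
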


\begin{proof}
By Theorem \ref{AlgThmGamma1}, we may assume $\im \rho_{E,\ell^{\infty}}$ is not a known image, so by \cite[Theorem 1.9]{Furio2024} it remains to address the following cases:
\begin{itemize}
    \item The group $\im \rho_{E,\ell^{\infty}}$ has level $\ell^d$ and up to conjugation $\im \rho_{E,\ell^d}=C_{\text{ns}}^+(\ell^d)$, the normalizer of a non-split Cartan subgroup of level $\ell^d$. Thus with an appropriate choice of basis, there is a quadratic extension $K/\Q$ such that
    \[
\rho_{E,\ell^d}(\Gal_K)=\left\lbrace \begin{bmatrix}
a& \epsilon b\\
b & a
\end{bmatrix} |\, a,b \in \Z/\ell^d\Z, \, (a,b) \neq (0,0) \pmod \ell\right\rbrace,
\] where $\epsilon$ is a fixed integer which is not a quadratic residue modulo $\ell$. The order of this group is $\ell^{2d-2}(\ell^2-1)$. A calculation shows that the only matrix in $\rho_{E,\ell^d}(\Gal_K)$ fixing a point $P \in E(\overline{\Q})$ of order $\ell^d$ is the identity matrix. It follows that $K(P)/K$ has order $\ell^{2d-2}(\ell^2-1)$. Hence $\Q(P)/\Q$ has degree at least $\ell^{2d-2}(\ell^2-1)$, and in fact equality must hold since this is always an upper bound. From this we may deduce that $x\in X_1(\ell^n)$ associated to $E$ has degree $\frac{1}{2}(\ell^{2n-2}(\ell^2-1))$, and so $\deg(x)=\deg(f(x)) \cdot \deg(x)$ where $f: X_1(\ell^n) \rightarrow X_1(1)$ is the natural map. By Theorem \ref{BELOV_level}, the point $f(x) \in X_1(1)$ is isolated, which is a contradiction.

\item The group $\im \rho_{E,\ell^{\infty}}$ has level $\ell^2$ and 
\[ \im \rho_{E,\ell^2} \cong C_{\text{ns}}^+(\ell) \ltimes \left\lbrace I + \ell\begin{bmatrix}
a& \epsilon b\\
-b & c
\end{bmatrix}\right \rbrace \] 
where $\epsilon$ is a fixed integer which is not a quadratic residue modulo $\ell$. 
By Theorem \ref{BELOV_level}, it suffices to show there are no isolated points associated to $E$ on $X_1(\ell^2)$ or $X_1(\ell)$. Since $\im \rho_{E,\ell} = C_{\text{ns}}^+(\ell)$, it follows from \cite[Lemma 5.2]{GJNajman20} that any point on $X_1(\ell)$ associated to $E$ has degree $(\ell^2-1)/2=\deg(X_1(\ell) \rightarrow X_1(1))$. Thus an isolated point on $X_1(\ell)$ associated to $E$ would map to an isolated point on $X_1(1)$ by Theorem \ref{BELOV_level}, and we have a contradiction. It remains to show there are no isolated points associated to $E$ on $X_1(\ell^2)$.

If $\ell>13$, then we may apply Proposition \ref{PropDegModCurve}. Indeed, in this case, any isolated point $x\in X_1(\ell^n)$ with $j(x)=j(E)$ would satisfy \[
\deg(x)=\deg(f(x))\cdot \deg(f),
\]
where $f: X_1(\ell^n) \rightarrow X_1(1)$ is the natural map. This would imply $f(x)\in X_1(1) \cong \mathbb{P}^1$ is isolated by Theorem \ref{BELOV_level}, which is a contradiction. So assume $\ell \leq 13$. For these values of $\ell$, the genus of $X_1(\ell^2)$ is given below:
    \begin{table}[H]
        \centering
        \begin{tabular}{c|c}
           $\ell=3$  &  0\\
           \hline
            $\ell=5$ & 12 \\
               \hline
            $\ell=7$ & 69 \\
            \hline
            $\ell=11$ & 526 \\
                           \hline
            $\ell=13$ & 1070 \\
        \end{tabular}
    \end{table}
We will show that the degree of any closed point $x=[E,P]\in X_1(\ell^2)$ with $j(E)=j(x)$ is at least $\ell (\ell^2-1)/2$. Comparing this to the genus table given above, we observe that the degree of $x$ is greater than the genus of $X_1(\ell^2)$. Thus the Riemann-Roch space associated to $x$ has dimension at least 2, and so $x$ is not isolated.  
    
The group $\im\rho_{E,\ell^2}$ has order $2(\ell^2-1)\ell^3$ where the order of $C_{\text{ns}}^+(\ell)$ is $2(\ell^2-1)$. This implies that the degree of the field $\Q(E[\ell^2])$ over $\Q(E[\ell])$ is $\ell^3$.  Let $L \coloneqq\Q(E[\ell])$. Take $Q \in E(\overline{\Q})$ such that $\{P,Q\}$ is a basis for $E[\ell^2]$.
Since $[L(Q):L]\leq \ell^2$, we have
\[
\ell^3=[L(P,Q):L]\leq [L(Q,P):L(Q)]\cdot \ell^2.
\]
Hence
\[ \ell \leq [L(Q,P):L(Q)] \leq [L(P) : L] \leq [\Q(P):\Q(\ell P)]. \]
Since $[\Q(\ell P):\Q] =\ell^2-1$ by \cite[Lemma 5.2]{GJNajman20}, the inequalities above show $ \ell(\ell^2-1) \leq [\Q(P):\Q]$. Thus $x=[E,P]\in X_1(\ell^2)$ has degree greater than or equal to $\ell (\ell^2-1)/2$. \qedhere
\end{itemize}
\end{proof}

\begin{corollary}\label{CsnCor}
        Let $\ell$ be an odd prime and $n\in \Z^+$. Let $E/\Q$ be a non-CM elliptic curve. If $\im \rho_{E,\ell}$ is contained in the normalizer of a non-split Cartan subgroup, then there is no isolated point $x \in X_0(\ell^n)$ with $j(x)=j(E)$. 
\end{corollary}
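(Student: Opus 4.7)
The plan is to mimic the proof of Theorem~\ref{CsnThm}, replacing the degree computations on $X_1(\ell^n)$ with corresponding computations on $X_0(\ell^n)$. By Theorem~\ref{AlgThmGamma0} I may assume $\im\rho_{E,\ell^\infty}$ is not among the classified $\ell$-adic images, so by \cite[Theorem 1.9]{Furio2024} we are in one of the two cases analyzed in Theorem~\ref{CsnThm}. The goal in each case is to show that $x=[E,\langle P\rangle]\in X_0(\ell^n)$ has degree $(\ell+1)\ell^{n-1}=\deg(X_0(\ell^n)\to X_0(1))$; once this is established, Theorem~\ref{BELOV_level} forces $j(E)\in X_0(1)\cong\PP^1$ to be isolated, which is impossible. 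Equivalently, I need to show that $\im\rho_{E,\ell^n}$ acts transitively on the set of cyclic subgroups of order $\ell^n$ in $E[\ell^n]$.

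In Case 1, the proof of Theorem~\ref{CsnThm} already shows that $\im\rho_{E,\ell^n}$ acts transitively on the points of order $\ell^n$ in $E[\ell^n]$. Transitivity on such points immediately implies transitivity on the cyclic subgroups they generate, completing this case. In Case 2, transitivity on the $\ell+1$ lines in $\F_\ell^2$ is automatic from $\im\rho_{E,\ell}=C_{\text{ns}}^+(\ell)$, handling $X_0(\ell)$. For higher levels I would proceed by induction on $n$: assuming transitivity at level $\ell^{n-1}$, it suffices to show that the kernel of $\im\rho_{E,\ell^n}\to\im\rho_{E,\ell^{n-1}}$ acts transitively on the $\ell$ cyclic subgroups of order $\ell^n$ above a fixed cyclic subgroup of order $\ell^{n-1}$. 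For a kernel element $I+\ell^{n-1}M$, the identity $(I+\ell^{n-1}M)P=P+M(\ell^{n-1}P)$ reduces this to showing that the map $M\mapsto M(\ell^{n-1}P)$ surjects onto $E[\ell]$, after which a routine orbit-stabilizer count gives the desired transitivity.

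The main obstacle is the base case $n=2$ of this induction in Case 2, where the kernel of $\im\rho_{E,\ell^2}\to\im\rho_{E,\ell}$ has the restricted three-parameter form $\{I+\ell\bigl(\begin{smallmatrix}a&\epsilon b\\-b&c\end{smallmatrix}\bigr)\}$ rather than the full four-parameter kernel of $\GL_2(\Z/\ell^2\Z)\to\GL_2(\F_\ell)$. A short case analysis on the coordinates of $\ell P\in\F_\ell^2$ confirms that $(a,b,c)\mapsto M(\ell P)$ remains surjective onto $E[\ell]$ despite the constraint, after which the orbit-stabilizer computation proceeds as usual. For $n\geq 3$, the kernel of $\im\rho_{E,\ell^n}\to\im\rho_{E,\ell^{n-1}}$ is the full kernel of the corresponding $\GL_2$ projection, so surjectivity is automatic and the inductive step goes through without difficulty.
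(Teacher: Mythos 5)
Your argument is correct, and in Case 1 (level $\ell^d$, mod-$\ell^d$ image equal to $C_{\text{ns}}^+(\ell^d)$) it coincides with the paper's: the transitivity on points of order $\ell^n$ established in Theorem~\ref{CsnThm} passes to cyclic subgroups, gives $\deg(x)=(\ell+1)\ell^{n-1}$, and Theorem~\ref{BELOV_level} finishes. In Case 2 your route genuinely differs from the paper's. The paper does not compute the exact degree on $X_0(\ell^2)$: it imports the lower bound $\deg([E,P])\geq \ell(\ell^2-1)/2$ from the $X_1$ side, deduces only $\deg(x)\geq \ell+1$, and then splits into $\ell>13$ (where Corollary~\ref{CorModCurve}, which rests on Smith's ramification bounds, supplies the full degree) versus $\ell\leq 13$ (where $\ell+1$ exceeds the genus of $X_0(\ell^2)$ and Riemann--Roch shows $x$ is $\PP^1$-parameterized). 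You instead prove outright that $\im\rho_{E,\ell^n}$ acts transitively on the cyclic subgroups of order $\ell^n$; the key point, which checks out, is that for $\ell P=(x,y)\neq(0,0)$ the map $(a,b,c)\mapsto(ax+\epsilon by,\,-bx+cy)$ already surjects onto $\F_\ell^2$ (take $b=0$ when $xy\neq 0$, and use the two surviving parameters in the degenerate cases), so the restricted three-parameter kernel still permutes transitively the $\ell$ subgroups lying above a fixed subgroup of order $\ell$. This yields the exact degree $(\ell+1)\ell^{n-1}$ and lets Theorem~\ref{BELOV_level} map everything down to $X_0(1)\cong\PP^1$ uniformly in $\ell$ and $n$, avoiding both the genus table and the appeal to Corollary~\ref{CorModCurve}. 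Your version is arguably cleaner and slightly stronger---it pins down the degree exactly rather than bounding it below---at the cost of the small extra group-theoretic verification in the base case $n=2$.
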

\begin{proof}
    Let $x=[E, \langle P \rangle] \in X_0(\ell^n)$. By Theorem \ref{AlgThmGamma0}, we may assume $\im \rho_{E,\ell^{\infty}}$ is not a known image, so it remains to address the following cases as in the proof of the previous theorem:
\begin{itemize}
    \item The group $\im \rho_{E,\ell^{\infty}}$ has level $\ell^d$ and $\im \rho_{E,\ell^d}=C_{\text{ns}}^+(\ell^d)$, up to conjugation. In the previous theorem, we established that $[E,P]\in X_1(\ell^n)$ has degree $\frac{1}{2}(\ell^{2n-2}(\ell^2-1))$. Thus $\deg(x)=\ell^{n-1}(\ell+1)$. By Theorem \ref{BELOV_level}, the point $f(x) \in X_0(1)$ is isolated, where $f:X_0(\ell^n) \rightarrow X_0(1)$ is the natural map. This is a contradiction.

\item The group $\im \rho_{E,\ell^{\infty}}$ has level $\ell^2$ and 
\[ \im \rho_{E,\ell^2} \cong C_{\text{ns}}^+(\ell) \ltimes \left\lbrace I + \ell\begin{bmatrix}
a& \epsilon b\\
-b & c
\end{bmatrix}\right \rbrace. \] 
As in the previous theorem, we can reduce to the case of analyzing isolated points on $X_0(\ell^2)$. The proof of the previous theorem shows that the degree of $[E,P]\in X_1(\ell^2)$ is at least $\ell (\ell^2-1)/2$.  Hence $\deg(x) \geq \ell+1$.

If $\ell>13$, then we may apply Corollary \ref{CorModCurve} and as before use Theorem \ref{BELOV_level} to show no isolated points exist on $X_0(\ell^n)$. So assume $\ell \leq 13$. For these values of $\ell$, the genus of $X_0(\ell^2)$ is given below:
    \begin{table}[H]
        \centering
        \begin{tabular}{c|c}
           $\ell=3$  &  0\\
           \hline
            $\ell=5$ & 0 \\
               \hline
            $\ell=7$ & 1 \\
            \hline
            $\ell=11$ & 6 \\
                           \hline
            $\ell=13$ & 8 \\
        \end{tabular}
    \end{table}
Again we observe that the degree of $x$ is greater than the genus of $X_0(\ell^2)$ and an application of the  Riemann-Roch Theorem shows $x$ it is not isolated. \qedhere
\end{itemize}
\end{proof}
\section{Proof of Theorem \ref{X1ellThm}}
The claim about CM $j$-invariants follows from \cite[Theorem 7.1]{BELOV}, so we may henceforth restrict to the non-CM case. Let $E/\Q$ be a non-CM elliptic curve. If $\im \rho_{E,\ell^{\infty}}$ is conjugate to one of the known subgroups appearing in \cite{2adicImage,RSZB2022}, then the result follows from Theorem \ref{AlgThmGamma1}. Since the 2-adic image classification is complete, we may now assume $\ell$ is odd, and we may further assume $\im \rho_{E, \ell^{\infty}}$ is not one of the subgroups known to occur. By Theorem \ref{CsnThm}, we may assume $\im \rho_{E,\ell}$ is not contained in the normalizer of a non-split Cartan subgroup. Thus it follows from \cite[Theorem 1.6]{RSZB2022} that $\ell=7$ and $\im \rho_{E,7^{\infty}}$ is contained in 49.196.9.1. 

So suppose $\im \rho_{E,7^{\infty}}$ is contained in 49.196.9.1. Then $\im \rho_{E,7}$ must equal 7Ns.2.1, 7Ns.3.1, or 7Ns. By \cite[Theorem 1.5]{ZywinaImages} first two groups\footnote{In this paper, the groups are notated $H_{1,1}$ and $G_1$, respectively.} occur if and only if $j(E)=3^3\cdot5\cdot7^5/2^7$. Since degrees of closed points do not depend on the chosen equation for $E$, we may assume $E$ is given by LMFDB label \href{https://www.lmfdb.org/EllipticCurve/Q/2450/y/1}{2450.y1}. But in this case, $\im \rho_{E,7^{\infty}}=7.112.1.2$, and so any isolated point on $X_1(7^n)$ associated to $E$ will map to an isolated point on $X_1(7)$ via the natural projection map by Theorem \ref{BELOV_level}. As $X_1(7)$ has genus 0, this is a contradiction. Thus we may assume $\im \rho_{E,7}=$ 7Ns.

Magma computations referred to in the remainder of the proof can be found \href{https://github.com/abbey-bourdon/rational-isolated-prime-power/blob/main/Image49.196.9.1.txt}{here}. Since $\det(\im \rho_{E,49})=(\Z/49\Z)^{\times}$, a Magma computation shows that $\im \rho_{E,49}$ is either all of 49.196.9.1, or else it belongs to the unique conjugacy class of subgroups of index $49$. In the second case, by choosing a basis appropriately, we may assume \[\im \rho_{E,49}=\left\langle \left[\begin{matrix}
    1 & 0\\ 37 & 48
\end{matrix}\right], \left[\begin{matrix}
    20 & 4\\ 18 & 21
\end{matrix}\right], \left[\begin{matrix}
    31 & 17\\ 3 & 23
\end{matrix}\right], \left[\begin{matrix}
    22 & 0\\ 0 & 22
\end{matrix}\right], \left[\begin{matrix}
    34 & 26\\ 19 & 16
\end{matrix}\right], \left[\begin{matrix}
    33 & 26\\ 19 & 15
\end{matrix}\right] \right\rangle \leq \GL_2(\Z/49\Z).
\] We will rule out this possibility by following the proof of \cite[Theorem 1]{Vukorepa2022}. A computation shows this subgroup is conjugate to an index 7 subgroup of the normalizer of a split Cartan subgroup mod 49, denoted $C_{\text{s}}^+(49)$. Elliptic curves over $\Q$ with mod 49 image landing in $C_{\text{s}}^+(49)$ give rise to rational points on the modular curve $X_s^+(49)$; see \cite[Proposition 3.3]{ZywinaImages} for details. There is a $\Q$-isomorphism 
\[
X_s^{+}(7^2) \cong X_0^+(7^4),
\]
where the latter is the quotient of $X_0(7^4)$ by the Atkin-Lehner involution. Moreover, the moduli interpretation of this isomorphism guarantees cusps map to cusps and CM points map to CM points; see \cite[Section 2]{BiluParentRebolledo}. Since there are no non-CM, non-cuspidal rational points on $X_0^+(7^4)$ by \cite[Theorem 3.14]{MomoseShimura}, there are no non-CM, non-cuspidal rational points on $X_s(7^2)$. Thus we may assume $\im \rho_{E,49}=49.196.9.1$. 

In considering possible mod 343 images with surjective determinant, we find that only the complete pre-image of 49.196.9.1 reduces to exactly 49.196.9.1 mod 49. Thus $\im \rho_{E,7^{\infty}}=49.196.9.1$ by, for example, \cite[Proposition 3.5]{BELOV}. We can apply \href{https://github.com/abbey-bourdon/rational-isolated-prime-power/blob/main/isolated_points_from_image_Gamma1.m}{\tt{isolated\_points\_from\_image\_Gamma1.m}} as in $\S3.1$ to show there are no isolated points on $X_1(7^n)$ associated to elliptic curves over $\Q$ with this 7-adic image.

\begin{corollary}\label{7adicCor}
    Suppose $E/\Q$ is a non-CM elliptic curve with $\im \rho_{E,7^{\infty}}$ contained in $49.196.9.1$. Then  $\im \rho_{E,7^{\infty}}=49.196.9.1$.
\end{corollary}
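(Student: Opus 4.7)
The plan is to pin down $\im\rho_{E,7^\infty}$ one level at a time: first mod $7$, then mod $49$, and finally promote the equality to the full $7$-adic image via a stabilization argument.

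First I would determine $\im\rho_{E,7}$. Since $\im\rho_{E,7^\infty}$ is contained in $49.196.9.1$, the mod-$7$ image must be contained in the reduction of this group modulo $7$, which lands in the normalizer of a non-split Cartan subgroup. The candidates are $\mathtt{7Ns.2.1}$, $\mathtt{7Ns.3.1}$, and $\mathtt{7Ns}$. By \cite[Theorem 1.5]{ZywinaImages}, the first two occur only when $j(E)=3^3\cdot 5\cdot 7^5/2^7$. For any model with this $j$-invariant (for instance the LMFDB curve $\mathtt{2450.y1}$), a direct computation gives $\im\rho_{E,7^\infty}=\mathtt{7.112.1.2}$, which is not contained in $49.196.9.1$. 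Hence $\im\rho_{E,7}=\mathtt{7Ns}$.

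Next I would lift to level $49$. Because $\det\circ\rho_{E,49}$ is surjective onto $(\Z/49\Z)^\times$ by the Weil pairing, and $\im\rho_{E,49}$ reduces to all of $\mathtt{7Ns}$ mod $7$, a Magma enumeration of the subgroups of $49.196.9.1$ satisfying these two constraints should show that there are exactly two conjugacy classes: the full group itself, and one specific index-$49$ subgroup. The main obstacle is to rule out this second possibility. I would do this by conjugating the subgroup, via an explicit element of $\GL_2(\Z/49\Z)$ computed in Magma, into an index-$7$ subgroup of the normalizer of a split Cartan $C_{\text{s}}^+(49)$. Any non-CM $E/\Q$ whose mod-$49$ image lies in $C_{\text{s}}^+(49)$ yields a non-CM, non-cuspidal rational point on $X_s^+(49)$. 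Under the $\Q$-isomorphism $X_s^+(7^2)\cong X_0^+(7^4)$ of \cite[Section 2]{BiluParentRebolledo}, which sends cusps to cusps and CM points to CM points, this would produce a non-CM, non-cuspidal rational point on $X_0^+(7^4)$, contradicting \cite[Theorem 3.14]{MomoseShimura}. This would force $\im\rho_{E,49}=49.196.9.1$.

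Finally, to promote this equality to the full $7$-adic image, I would examine the lift from level $49$ to level $343$. Using Magma, I would enumerate the subgroups of the pre-image of $49.196.9.1$ in $\GL_2(\Z/343\Z)$ which have surjective determinant and whose mod-$49$ reduction is exactly $49.196.9.1$. Provided the only such subgroup is the full pre-image itself (this is the key finite computation), \cite[Proposition 3.5]{BELOV} then yields $\im\rho_{E,7^\infty}=49.196.9.1$, completing the proof.
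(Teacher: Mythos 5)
Your overall route is the same as the paper's: determine the mod $7$ image, rule out the exceptional $j$-invariant, show the mod $49$ image is either all of $49.196.9.1$ or a member of the unique conjugacy class of index-$49$ subgroups, eliminate the latter via the conjugation into $C_{\text{s}}^+(49)$ and the isomorphism $X_s^+(7^2)\cong X_0^+(7^4)$ together with Momose's theorem, and finally stabilize at level $343$ using \cite[Proposition 3.5]{BELOV}. All of those steps are exactly what the paper does, and the finite computations you flag are the ones the paper performs.

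There is one genuine gap, in your treatment of the case $j(E)=3^3\cdot 5\cdot 7^5/2^7$. You assert that ``for any model with this $j$-invariant'' a direct computation gives $\im\rho_{E,7^\infty}=\mathtt{7.112.1.2}$. But the $7$-adic image is \emph{not} an invariant of the $j$-invariant: quadratic twists of $\mathtt{2450.y1}$ can have different images, and you cannot compute the image for infinitely many twists. This is precisely the point where the corollary requires more than the proof of Theorem~\ref{X1ellThm} (there the authors could harmlessly replace $E$ by $\mathtt{2450.y1}$ because degrees of closed points depend only on $j$; here the statement is about the image itself). The paper closes this gap by observing that $E$ becomes isomorphic to $E'=\mathtt{2450.y1}$ over a quadratic field $K$, that $[K(E'[49]):K(E'[7])]=7^4$ forces $[\Q(E[49]):\Q(E[7])]=7^4$, and hence by \cite[Proposition 3.5]{BELOV} that $\im\rho_{E,7^\infty}$ has level $7$ — which is incompatible with containment in the level-$49$, index-$196$ group $49.196.9.1$. (An equivalent fix: for a quadratic twist $E^d$ one has $\langle \im\rho_{E^d,7^\infty},-I\rangle=\langle \im\rho_{E,7^\infty},-I\rangle$, and since $-I\in 49.196.9.1$, containment of $\im\rho_{E^d,7^\infty}$ in $49.196.9.1$ would force $\mathtt{7.112.1.2}\subseteq 49.196.9.1$, impossible on index grounds.) You need some such twist-invariance argument; as written, your single computation for $\mathtt{2450.y1}$ does not cover all curves with that $j$-invariant.
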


\begin{proof}
    The argument follows from the proof of the previous theorem, combined with the observation that an elliptic curve $E/\Q$ with $j(E)=3^3\cdot5\cdot7^5/2^7$ cannot have $\im \rho_{E,7^{\infty}}$ contained in 49.196.9.1. Indeed, there exists a quadratic extension $K$ such that $E/K$ is isomorphic to the base extension to $K$ of the elliptic curve $E'$ with LMFDB label \href{https://www.lmfdb.org/EllipticCurve/Q/2450/y/1}{2450.y1}. Since $K(E'[49])/K(E'[7])$ has degree $7^4$, it follows that $\Q(E[49])/\Q(E[7])$ has order $7^4$. Thus, $\im \rho_{E,7^{\infty}}$ has level 7 by \cite[Proposition 3.5]{BELOV}, and it cannot be contained in 49.196.9.1.
\end{proof}

\section{Proof of Theorem \ref{X0ellThm}}

If $E/\Q$ is a CM elliptic curve, then for each prime $\ell$ which is split in the CM field there exists a quadratic point on $X_0(\ell^n)$ associated to $E$ for all $n \in \Z^+$; such a point is sporadic (and hence isolated) for $\ell^n>131$ by work of Harris and Silverman \cite{harrissilverman}, Ogg \cite{oggpaper}, and Bars \cite{bars}. So we may assume $E/\Q$ is non-CM. As in the proof of Theorem \ref{X1ellThm}, we may assume $\ell$ is odd and that $\im \rho_{E, \ell^{\infty}}$ is not one of the subgroups known to occur by Theorem \ref{AlgThmGamma0}. By Corollary \ref{CsnCor}, we may assume that $\im \rho_{E,\ell}$ is not contained in the normalizer of a non-split Cartan subgroup. Thus, it follows from \cite[Theorem 1.1.6]{RSZB2022} and Corollary \ref{7adicCor} that $\ell=7$ and $\im \rho_{E,7^{\infty}}=49.196.9.1$. In this case, an application of \href{https://github.com/abbey-bourdon/rational-isolated-prime-power/blob/main/isolated_points_from_image_Gamma0.m}{\tt{isolated\_points\_from\_image\_Gamma0.m}} as in Section 3.2 shows that there are no isolated points on $X_0(\ell^n)$ associated to $E/\Q$ with this 7-adic image.

\bibliographystyle{amsplain}
\bibliography{bibliography}
\end{document}